\newtheorem{theorem}{Theorem}
\numberwithin{theorem}{section}
\newtheorem{lemma}[theorem]{Lemma}
\newtheorem{corollary}[theorem]{Corollary}
\theoremstyle{definition}
\newtheorem{remark}[theorem]{Remark}
\numberwithin{equation}{section}
\numberwithin{figure}{section}
\renewcommand{\Re}{\mathop{\rm Re}\nolimits}
\newcommand{\R}{\mathbb R}
\newcommand{\Rbarplus}{{\ts\overline{\R_+}\ts}}
\newcommand{\C}{\mathbb{C}}
\newcommand{\N}{\mathbb{N}}
\newcommand{\Z}{\mathbb{Z}}
\newcommand{\la}{\langle}
\newcommand{\ra}{\rangle}
\newcommand{\hf}{{\frac 12}}
\newcommand{\hatL}{{\widehat L^1}}
\newcommand{\Cos}{\mathbf C}
\newcommand{\Sin}{\mathbf S}
\newcommand{\V}{\mathbf{V}}
\newcommand{\Lam}{\mathbf \Lambda}
\renewcommand{\L}{\mathcal{L}}
\newcommand{\bfs}{\mathbf{s}}
\newcommand{\one}{{\mathbf{1}}}
\newcommand{\ts}{\hspace{.06em}}
\DeclareMathOperator{\trace}{Tr}
\DeclareMathOperator{\comp}{c}
\DeclareMathOperator{\supp}{supp}
\DeclareMathOperator{\diam}{diam}
\title [Trace of the wave group and regularity of potentials]{On the trace of the wave group and regularity of potentials}
\author[H. F. Smith]{Hart F. Smith}
\address{Department of Mathematics, University of Washington, Seattle, WA 98195-4350, USA}
\email{hfsmith@uw.edu}
\keywords{Wave trace, Schr\"odinger operators}
\subjclass[2010]{58J45, 58J50, 46E35}
\begin{document}

\begin{abstract}
For the wave equation $\partial_t^2-\Delta+V$ on $\mathbb{R}^d$ with compactly supported, real valued potential $V$, we establish a sharp relation between Sobolev regularity of $V$ and the existence of finite order expansions as $t\rightarrow 0$ for the relative trace of the wave group.
\end{abstract}

\maketitle

\section{Introduction and statement of results}
Suppose that $V$ is a real valued, bounded measurable function of compact support on $\R^d$.
Define the wave group $U_V$ by $U_V(f,g)=(u,\partial_t u)$, where $u$ is the solution to the initial value problem for the following wave equation
$$
\bigl(\partial_t^2-\Delta+V\bigr)u(t,x)=0\,,\quad 
(u,\partial_t u)\bigr|_{t=0}=(f,g)\in H^1\times L^2(\R^d).
$$
Let $U_V(s)$ denote the map $(f,g)\rightarrow (u,\partial_t u)|_{t=s}$.

If $V\in C_{\comp}^\infty(\R^d)$, it is well known that for $\phi\in C_{\comp}^\infty(\R)$ the operator
$$
\la\phi,U_V-U_0\ra=\int \phi(t)\bigl(U_V(t)-U_0(t)\bigr)\,dt
$$
is trace class; see for example \cite{DyZw}, \cite{Me1}, \cite{Me2}, \cite{SaBZw}. The map $\phi\rightarrow \trace\la\phi,U_V-U_0\ra$ defines an even distribution on $\R$, denoted by $\trace(U_V-U_0)$. For $V\in C_{\comp}^\infty(\R^d)$ this distribution is smooth on $\R\backslash\{0\}$, and admits an asymptotic expansion in $t$ for $t$ near $0$.
If $d$ is odd, the expansion takes the form
$$
\trace(U_V-U_0)=\sum_{j=1}^{(d-1)/2}w_j(V) D^{d-1-2j}\delta(t)+\sum_{j=(d+1)/2}^\infty w_j(V)\,|t|^{2j-d},
$$
and if $d$ is even it takes the form
$$
\trace(U_V-U_0)=\sum_{j=1}^{(d-2)/2} w_j(V)D^{d-1-2j}\text{p.v.}(1/t)+\sum_{j=d/2}^\infty w_j(V)\,t^{2j-d}.
$$
The expansion holds in the sense that the difference between the left side and the finite sum to $j=m$ is a continuous function on $\R$ that is $\mathcal{O}(|t|^{2m+2-d})$ for $t$ near $0$, provided $m\ge d/2$.

In this paper we consider the existence of similar expansions to finite order when $V$ is of limited regularity, and show that for real valued, compactly supported potentials $V$, existence of such an expansion to order $j=m+2$ is equivalent to the Sobolev regularity condition $V\in H^m(\R^d)$. 
Our proof shows that $\la \phi, U_V-U_0\ra$ is of trace-class for $V\in L^\infty_{\comp}(\R^d)$, but if $d\ge 4$ then the proof of the expansion in $t$ for the trace relies on $L^1$ bounds on the Fourier transform $\widehat V$ of $V$, and thus for $d\ge 4$ we need to place the stronger a priori assumption that $\widehat V\in L^1(\R^d)$ when proving bounds on various remainder terms.

We define the space $\hatL(\R^d)\subset L^\infty(\R^d)$ with norm
$$
\|V\|_{\hatL}=(2\pi)^{-d}\int|\widehat V(\xi)|\,d\xi,
$$
and let $\hatL_{\comp}\subset L^\infty_{\comp}$ denote the space of $V\in \hatL$ with compact support.

We define the space $X_d$ according to whether $d\le 3$ or not,
\begin{equation*}
X_d=\begin{cases} L^\infty_{\comp}(\R^d),&\;\;d\le 3,\\ 
\hatL_{\comp}(\R^d),&\;\;d\ge 4.\rule{0pt}{18pt}
\end{cases}
\end{equation*}
Finally, let
$$
\|V\|_{X_d\cap H^m}=\|V\|_{X_d}+\|V\|_{H^m},
$$
and note that $H^s_{\comp}(\R^d)\subset X_d$ if $s>d/2$.

\begin{theorem}\label{thm:main}
Assume that $V\in L^\infty_{\comp}(\R^d)$. If $\phi\in C_{\comp}^\infty(\R)$, then the operator on $H^1(\R^d)\times L^2(\R^d)$ defined by 
$$
(f,g)\rightarrow \int\phi(t)\Bigl(U_V(t)(f,g)-U_0(t)(f,g)\Bigr)\,dt
$$
is trace-class, and its trace defines an even distribution acting on $\phi$, which we denote by $\trace\la \phi,U_V-U_0\ra$.

If $V\in X_d$, and $\phi\in C_{\comp}^\infty(\R)$ is an even function, there are distributions $\nu_d$ and $\mu_{d,V}$ on $\R$ such that
$$
\trace\la \phi,U_V-U_0\ra=\nu_d(\phi)\int V(x)\,dx+\mu_{d,V}(\phi)\,
$$
where $\mu_{d,V}$ has the following form, with $\alpha_V(t)\in C(\overline{\R_+})$,
\begin{equation*}
\mu_{d,V}(\phi)=
\begin{cases}
\displaystyle\int_0^\infty t^{4-d}\alpha_V(t)\,\phi(t)\,dt,& 1\le d\le 4,\\
\displaystyle\int_0^\infty \alpha_V(t)\,\partial_t(t^{-1}\partial_t)^{\frac{d-5}2}\phi(t)\,dt,& d\ge 5\;\text{odd},\rule{0pt}{20pt}\\
\displaystyle\int_0^\infty \alpha_V(t)\,(t^{-1}\partial_t)^{\frac{d-4}2}\phi(t)\,dt,& d\ge 6\;\text{even}.\rule{0pt}{20pt}
\end{cases}
\end{equation*}
If $V\in X_d\cap H^m(\R^d)$, $m$ an integer, then
$\alpha_V(t)\in C^{2m}(\Rbarplus)$, and its Taylor expansion at $0$ is of the form
\begin{equation}\label{eqn:traceW}
\alpha_V(t)=\sum_{j=0}^m a_{j+2}(V)\ts t^{2j}+o(t^{2m})\,,
\end{equation}
where $a_{j+2}(V)$ is a dimension dependent multiple of the Schr\"odinger heat invariant $c_{j+2}(V)$.

Conversely, if $V\in X_d$ is real valued, and there are constants $b_j\in \C$ and $C<\infty$ so that
\begin{equation}\label{eqn:traceW'}
\biggl|\alpha_V(t)-\sum_{j=0}^{m-1} b_j \ts t^{2j}\biggr|\le C\,t^{2m}\quad\text{for}\quad 0\le t\le 1,
\end{equation}
then $V\in H^{m}(\R^d)$, and hence \eqref{eqn:traceW} holds.
\end{theorem}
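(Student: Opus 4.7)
The plan is to expand $U_V - U_0$ via a Born/Duhamel series
\[
U_V(t) - U_0(t) = \sum_{k \ge 1} T_k(V)(t),
\]
where $T_k$ is $k$-linear in $V$ and equals an iterated integral over the simplex $0 \le s_k \le \cdots \le s_1 \le t$ of alternating free propagators $U_0$ and multiplication by $V$. The trace-class statement then follows by combining compact support of $V$ with finite propagation speed for $U_0$: each $\la \phi, T_k \ra$ factors as a product of Hilbert--Schmidt operators whose trace norms are geometrically summable in $k$, uniformly for $V \in L^\infty_{\comp}$.

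Next I would isolate the linear term $\la \phi, T_1(V)\ra$, whose trace on the Fourier side reduces to an explicit kernel in $\phi$ times $\int V$, producing $\nu_d(\phi) \int V$. The tail $\sum_{k \ge 2} \la\phi, T_k(V)\ra$ is then identified with $\mu_{d,V}(\phi)$. To put it into the stated form, I would apply Plancherel together with the Fourier representation $U_0(t) = \cos(t|D|)$ and factor out of the $t$-integration the dimension-specific singular kernel appearing in $\phi$ (the $t^{4-d}$ density for $d \le 4$; the derivatives of $\phi$ for $d \ge 5$), leaving a coefficient function $\alpha_V(t)$ continuous on $\overline{\R_+}$. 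For $d \ge 4$ the hypothesis $\widehat V \in L^1$ is used here to guarantee absolute convergence of the resulting frequency integrals.

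For the implication $V \in X_d \cap H^m \Rightarrow \alpha_V \in C^{2m}$ with the expansion \eqref{eqn:traceW}, the route is via the Schr\"odinger heat trace: a standard subordination identity relates $\cos(t\sqrt{L})$ to $e^{-sL}$, and the heat asymptotic
\[
\trace\bigl(e^{-s(-\Delta+V)} - e^{s\Delta}\bigr) \sim \sum_j c_j(V)\,s^{\,j - d/2},
\]
which is valid to order $j \le m+2$ when $V \in H^m$, converts through the subordination integral into the Taylor expansion of $\alpha_V(t)$ at $t=0$. Matching powers yields $a_{j+2}(V)$ as a universal dimension-dependent multiple of $c_{j+2}(V)$.

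The converse is the main obstacle. The idea is to extract $\|V\|_{H^m}^2$ from the quadratic-in-$V$ term $T_2$, whose contribution to $\alpha_V$ admits a Fourier representation
\[
\int |\widehat V(\xi)|^2\,\Phi_d(t,|\xi|)\,d\xi,
\]
with a kernel $\Phi_d$ whose order-$2m$ Taylor remainder in $t$ has a positive lower bound comparable to $|\xi|^{2m}$ at frequencies $|\xi| \sim 1/t$. The contributions of the higher-order terms $T_k$ for $k \ge 3$ are smoother in $t$ thanks to the additional internal integrations, and should be controllable inductively in $m$ by a bound of the form $\|V\|_{X_d}^{k-2}\|V\|_{H^{m-1}}^2$, hence subordinate to the quadratic term. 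The assumption \eqref{eqn:traceW'} then forces $\int |\widehat V(\xi)|^2 (1+|\xi|)^{2m}\,d\xi < \infty$, i.e.\ $V \in H^m$. The hard part is making both halves of this argument rigorous at once: the positive lower bound on $\Phi_d$ at the correct frequency scale, and the uniform high-order Taylor control of the higher-$k$ Born terms, where the $X_d$ hypothesis is essential in its full strength.
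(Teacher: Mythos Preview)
Your overall architecture matches the paper's closely: Duhamel expansion $U_V - U_0 = \sum_{k\ge 1}(-1)^k W_{k,V}$, trace-class via Schatten bounds exploiting finite speed and compact support, the $k=1$ term giving $\nu_d(\phi)\int V$, and the $k\ge 2$ tail assembled into $\mu_{d,V}$ via Fourier/Plancherel. Your converse strategy is also essentially the paper's: isolate the quadratic term $k=2$ with its explicit Fourier kernel $\int|\widehat V(\eta)|^2\Phi_d(t,|\eta|)\,d\eta$ whose Taylor remainder has a definite sign, show by induction on $m$ that the $k\ge 3$ contributions are already $C^{2(m-1)}$ (the paper makes this precise via the extra factor $t^{2(k-2)}$), transfer the hypothesis \eqref{eqn:traceW'} to $a_{2,V}$, and conclude $V\in H^m$ by Fatou.

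There is one genuine gap. For the forward implication $V\in X_d\cap H^m \Rightarrow \alpha_V\in C^{2m}$ with the expansion \eqref{eqn:traceW}, you propose to invoke the heat-trace asymptotic and transfer it to the wave side via subordination. But the subordination identity runs the wrong way: one has
\[
e^{-tL} \;=\; (4\pi t)^{-1/2}\int_{\R} e^{-s^2/4t}\cos(s\sqrt L)\,ds,
\]
which extracts heat asymptotics from wave regularity, not conversely. Inverting this is a Laplace-type inversion, and a finite-order heat expansion at $t\to 0^+$ does not by itself yield pointwise $C^{2m}$ regularity of $\alpha_V$ on $\overline{\R_+}$. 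The paper makes exactly this point (Remark~\ref{rem:heat}) and proceeds instead by direct analysis: for each $k\ge 2$ it writes $a_{k,V}(t)$ as an integral of $G_{k-(d+1)/2}\bigl(t^2 Q_{k,\bfs}(\eta')\bigr)$ (a Bessel-type entire function) against $\widehat V(\eta_2)\cdots\widehat V(-\eta_k)$, or an analogous $\cos$/$J_0$ kernel when $2k<d$, and then differentiates under the integral using $|\partial_t^j G_\nu(t^2 c)|\lesssim c^{j/2}$, the bound $Q_{k,\bfs}(\eta')^{1/2}\le \sum_j|\eta_{j+1}-\eta_j|$, and a Young/H\"older estimate (Lemma~\ref{lem:Vhatprod}). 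This yields $a_{k,V}\in C^{2m}$ with summable-in-$k$ bounds, and the Taylor coefficients come from the power series of $G_\nu$. For $d\le 3$ the same regularity is obtained from a spatial representation against a compactly supported measure $d\sigma_k$, allowing $X_d=L^\infty_{\comp}$. The identification with the heat invariants $c_{j+2}(V)$ is then a consequence, not an input.

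A minor technical point on your trace-class step: a bare Hilbert--Schmidt factorization is not enough for small $k$, since $V\Sin(s)$ lies only in $\L^{d+1}$ on the relevant box. The paper handles $k\le d$ by an integration-by-parts trick that trades $\Sin(s_j)$ factors for $(1-\Delta)^{-1}$ factors until the total Schatten exponent reaches $1$.
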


We note that the $a_j(V)$ in \eqref{eqn:traceW} differ by a constant from the $w_j(V)$ in the expansion presented for smooth potentials.
The form of $\nu_d$ is given in Theorem \ref{thm:traceW1}, and agrees with the term $j=1$ in the expansion for smooth $V$ above. It can also be written in a form similar to that for $\mu_{d,V}$.

The proof of Theorem \ref{thm:main} establishes pointwise bounds on $\alpha_{V}$ and its derivatives. In particular the following holds,
$$
|\alpha_{V}(t)|\le C_d\,\|V\|_{L^2}^2\cosh\Bigl(t\ts \|V\|_{X_d}^{1/2}\Bigr).
$$

An analogous result for the heat kernel of $-\Delta+V$ on Euclidean space was established by the author and Zworski in \cite{SmZw}. There it was proven for $V\in L^\infty_{\comp}(\R^d)$ in all dimensions $d$. The same result was subsequently established by the author in \cite{Sm} for the heat kernel on complete Riemannian manifolds under mild geometric assumptions. The study of heat traces has an extensive history, going back to Kac, Berger and McKean-Singer. For detailed discussions of the heat trace for Schr\"odinger operators we refer to Ba\~nuelos-S\`a Baretto \cite{BaSaB}, Colin de Verdi\`ere \cite{CdV}, Hitrik-Polterovich \cite{HiPo}, and Gilkey \cite{Gilkey}. The coefficients $c_j(V)$ in the expansion of the relative heat trace, known as heat invariants, are a key tool in the proof of compactness of isospectral families of potentials; see Br\"uning \cite{Br} and Donnelly \cite{Don}.

There appears to be far fewer treatments of the trace of the wave kernel for $-\Delta+V$, even though it is a more fundamental operator in the sense that the trace expansion for the heat kernel follows easily from that for the wave kernel, but not vice versa; see Remark \ref{rem:heat} below. The wave invariants $w_j(V)$ are dimension dependent multiples of the heat invariants $c_j(V)$, as noted by S\'a Baretto-Zworski \cite{SaBZw}, thus on a compact manifold they do not give new spectral invariants. In Euclidean space of odd dimension, on the other hand, the wave-trace restricted to $t\ne 0$ is related to the resonances of $-\Delta+V$ by the trace formula of Melrose; see \cite[Theorem 3.53]{DyZw}. As a consequence, if $V_1$ and $V_2$ are bounded real potentials with the same set of resonances and eigenvalues, counted with the appropriate multiplicity, then
$$
t^d \trace(U_{V_1}-U_0)=t^d \trace(U_{V_2}-U_0).
$$

As observed by Hislop-Wolf \cite{HiWo}, in odd dimensions this shows that if the $V_j$ are smooth potentials, then $V_j$ have the same heat coefficients, $c_k(V_1)=c_k(V_2)$, if $k\ge (d+1)/2$.
This in turn is used in \cite{HiWo} to establish a compactness theorem for iso-resonant families of $C_{\comp}^\infty$ potentials with support in a common ball, when $d=1$ or $3$. 

A different proof of this result was given by Christiansen, who proved equality of $c_k(V_j)$ for $k\ge (d+1)/2$ by using the Birman-Krein formula to relate the trace of the heat kernel to the poles of the scattering phase. See \cite[Theorem 1.2]{SmZw} for a discussion of her proof in odd dimensions. This method was extended by Christiansen in \cite{TC1} to prove iso-resonant compactness and related results for $C_{\comp}^\infty$ potentials in even dimensions.

One of the motivations for this paper was a remark by the authors of \cite{HiWo} that existence of the expansion of Theorem \ref{thm:main} would extend certain results of \cite{HiWo} to potentials of Sobolev regularity. Precisely, if $d$ is odd then
Theorem \ref{thm:main} can be combined with the Melrose trace formula to prove equality of the heat coefficients $c_k(V)$ for $(d+1)/2\le k\le m+2$, assuming $V\in X_d\cap H^m(\R^d)$ is real and $m\ge (d-3)/2$. Christiansen's method combined with the heat trace expansion of \cite{SmZw} also implies this result.
In dimensions $d=1,3$, this implies $\|V_1\|_{L^2}=\|V_2\|_{L^2}$, and one can bootstrap to obtain the following inverse result.

\begin{theorem}
Suppose that $d=1,3$ and that $V_1, V_2\in L^\infty_{\comp}(\R^d)$ are iso-resonant. If $V_1\in H^m(\R^d)$, where $m\ge 0$ is an integer, then $V_2\in H^m(\R^d)$, and there is a polynomial bounded function $F_m$ such that
\begin{equation}\label{eqn:Sobbound}
\|V_2\|_{H^m(\R^d)}\le F_m\bigl(\|V_1\|_{H^m(\R^d)}\bigr).
\end{equation}
\end{theorem}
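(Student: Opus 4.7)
I would proceed by induction on $m$, coupling the forward direction of Theorem~\ref{thm:main} applied to $V_1$ with the converse applied to $V_2$; Melrose's trace formula provides the bridge. The base case $m=0$ is the identity $\|V_2\|_{L^2}=\|V_1\|_{L^2}$ already recorded in the introduction: iso-resonance implies $c_k(V_1)=c_k(V_2)$ for $k\ge(d+1)/2$, and the case $k=2$ together with $c_1$-equality in $d=1$ delivers the $L^2$-identity, so $F_0(x)=x$.

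The key step is to promote the distributional trace agreement to the pointwise identity $\alpha_{V_1}=\alpha_{V_2}$ on $\overline{\R_+}$. Melrose's formula gives $t^d\bigl(\trace(U_{V_1}-U_0)-\trace(U_{V_2}-U_0)\bigr)=0$, so the difference is supported at $\{0\}$ with order below $d$, and by even parity is a combination of even-order derivatives of $\delta$. Substituting the decomposition $\trace\la\phi,U_V-U_0\ra=\nu_d(\phi)\int V(x)\,dx+\mu_{d,V}(\phi)$ with $\mu_{d,V}(\phi)=\int_0^\infty t^{4-d}\alpha_V(t)\phi(t)\,dt$, and testing against even $\phi$ supported in $\{|t|\ge\e\}$, the delta-supported terms drop out; the $\nu_d$ contribution drops out because $\int V_1=\int V_2$ by $c_1$-equality in $d=1$, while $\nu_3(\phi)=C\phi(0)$ vanishes on such $\phi$ in $d=3$. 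What remains is $\int_0^\infty t^{4-d}(\alpha_{V_1}-\alpha_{V_2})\phi\,dt=0$ for every such $\phi$, so by $\alpha_{V_j}\in C(\overline{\R_+})$ we conclude $\alpha_{V_1}=\alpha_{V_2}$ on all of $\overline{\R_+}$.

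For the inductive step, assume the claim at level $m-1$ and take $V_1\in H^m$. The forward direction of Theorem~\ref{thm:main} yields the expansion \eqref{eqn:traceW} for $\alpha_{V_1}$, with coefficients $a_{j+2}(V_1)$ and the remainder constant in \eqref{eqn:traceW'} bounded polynomially in $\|V_1\|_{H^m}$ and $\|V_1\|_{L^\infty}$ (this uses the quantitative estimates such as $|\alpha_V(t)|\le C_d\|V\|_{L^2}^2\cosh(t\|V\|_{X_d}^{1/2})$ together with the analogous bounds on derivatives asserted in the introduction). Because $\alpha_{V_2}=\alpha_{V_1}$, the same estimate \eqref{eqn:traceW'} with $b_j=a_{j+2}(V_1)$ is valid for $\alpha_{V_2}$, and the quantitative form of the converse direction of Theorem~\ref{thm:main} then produces $V_2\in H^m$ with $\|V_2\|_{H^m}$ bounded polynomially by the same constants and by $\|V_2\|_{L^\infty}$.

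The main obstacle is then to convert these bounds into a polynomial in $\|V_1\|_{H^m}$ alone, which amounts to controlling $\|V_j\|_{L^\infty}$. For $V_1$ this is immediate from Sobolev embedding whenever $m>d/2$ (i.e.\ $m\ge 1$ in $d=1$ and $m\ge 2$ in $d=3$). For $V_2$, the same route works at the inductive step once $m-1>d/2$, via $\|V_2\|_{L^\infty}\le C\|V_2\|_{H^{m-1}}\le C\ts F_{m-1}\bigl(\|V_1\|_{H^{m-1}}\bigr)$. The remaining small-$m$ cases (notably $d=3$ with $m\le 1$) require a separate argument, most naturally via the heat-invariant equalities $c_k(V_1)=c_k(V_2)$ for $k\ge 2$ and a high-$k$ asymptotic that bounds $\|V_2\|_{L^\infty}$ in terms of $\|V_1\|_{L^\infty}$, the latter already being controlled at the endpoint of the induction. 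Carrying this endgame through while preserving the polynomial dependence is the delicate technical point of the proof.
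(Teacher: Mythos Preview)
Your route through the pointwise identity $\alpha_{V_1}=\alpha_{V_2}$ is correct and is what underlies the paper's bootstrap. (One small point: in $d=1$ your appeal to ``$c_1$-equality'' is circular as phrased, since that equality is itself a consequence of the trace identity you are deriving; the clean argument is that in $d=1$ both the $\nu_1$-term and the $\mu_{1,V}$-term are given by locally integrable functions of $t$, so a difference supported at $\{0\}$ forces $\int V_1=\int V_2$ and $\alpha_{V_1}=\alpha_{V_2}$ simultaneously.) With $\alpha_{V_1}=\alpha_{V_2}$ in hand, the converse direction of Theorem~\ref{thm:main} gives $V_2\in H^m$ qualitatively, and hence the Taylor coefficients of $\alpha_{V_j}$---equivalently the heat invariants $c_k(V_j)$ for $2\le k\le m+2$---agree.

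The gap you flag but do not close is real, and it is precisely where the paper's argument differs from yours. Running the converse of Theorem~\ref{thm:main} quantitatively requires isolating $a_{2,V_2}$ by subtracting $\sum_{k\ge 3}t^{2k-4}a_{k,V_2}$, and the bounds on those terms carry factors of $\|V_2\|_{L^\infty}^{k-2}$. Your proposed remedy---a ``high-$k$ asymptotic'' to control $\|V_2\|_{L^\infty}$ by $\|V_1\|_{L^\infty}$---does not work: high-$k$ invariants need high regularity even to exist, and in any case $\|V_1\|_{L^\infty}$ is not dominated by $\|V_1\|_{H^m}$ when $m\le d/2$, so you would not obtain $F_m$ as a function of $\|V_1\|_{H^m}$ alone. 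The paper avoids this entirely by invoking the Br\"uning--Donnelly inequality $\|V\|_{H^m}\le P_{m,d}\bigl(c_2(V),\ldots,c_{m+2}(V)\bigr)$, valid for $d\le 3$. Since the $c_k$ coincide, $\|V_2\|_{H^m}\le P_{m,d}\bigl(c_2(V_1),\ldots,c_{m+2}(V_1)\bigr)$, and the essential point is that for $d\le 3$ each $c_k(V_1)$ with $k\le m+2$ is bounded by a polynomial in $\|V_1\|_{H^m}$ alone, via Gagliardo--Nirenberg and Sobolev embedding (for instance $\int V^3\le C\|V\|_{H^1}^3$ in $d=3$); no $L^\infty$ norm ever enters.
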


Given equality of the heat coefficients for $V_1$ and $V_2$,
the bound \eqref{eqn:Sobbound} follows from the papers \cite{Br} and \cite{Don}, where it was shown that for $1\le d\le 3$, one can bound
$$
\|V\|_{H^m}\le P_{m,d}\bigl(c_2(V),\ldots,c_{m+2}(V)\bigr),
$$
for a dimension dependent polynomial $P_{m,d}$. This is obtained by induction from a bound of the form
\begin{equation*}
\||D|^m V\|_{L^2}\le C_{m,d}\, c_{m+2}(V)+P_{m,d}\bigl(\|V\|_{H^{m-1}}\bigr).
\end{equation*}
The papers \cite{Br} and \cite{Don} work on compact manifolds, but the estimates hold globally on $\R^d$.

Consequently, the family of $L^\infty_{\comp}$ real potentials that are iso-resonant to a given potential in $H^m(\R^d)$ forms a bounded subset of $H^m(\R^d)$, if $d=1,3$. That this family is closed is shown in \cite{HiWo}. If one restricts attention to potentials with support in fixed ball of finite radius, then by Rellich's Lemma such potentials are a compact subset of $H^s(\R^d)$ if $s<m$.

\section{Preliminary reductions}

We prove Theorem \ref{thm:main} using the iterative construction of $U_V$. Define
$$
\Cos(t)=\cos\bigl(t|D|\bigr),\qquad \Sin(t)=\frac{\sin\bigl(t|D|\bigr)}{|D|}.
$$
Then the free wave group is given by
$$
U_0(t)=\begin{bmatrix} \Cos(t) & \Sin(t) \\ \Delta\Sin(t) & \Cos(t)\end{bmatrix}.
$$

Let $\Lam$ denote the following operator on $C([-T,T];H^1\times L^2)$, $T<\infty$,
$$
\Lam (F,G)(t,\cdot)=\int_0^t U_0(t-s)(F,G)(s,\cdot)\,ds.
$$
Then 
$$
U_V-U_0=
\sum_{k=1}^\infty (-\Lam \V)^kU_0, \qquad \V=\begin{bmatrix} 0 & 0 \\ V & 0 \end{bmatrix},
$$
where the sum converges absolutely in the $H^1\times L^2\rightarrow C(H^1\times L^2)$ operator norm, for each $T<\infty$.

We recall some basic properties of trace-class operators. For a compact operator $A$ on a Hilbert space with principal values $\sigma(A)$, one defines the Schatten-$p$ norms of $A$ for $1\le p\le\infty$ by
$$
\|A\|_{\L^p}=\biggl(\sum_{j=1}^\infty \sigma_j(A)^p\biggr)^{1/p}.
$$
The $\L^\infty$ norm is the operator norm, and $\L^1$ is the trace-class norm. As a corollary of \cite[Theorem 1]{KyFan} of Ky Fan, we note the following H\"older type estimate,
\begin{equation}\label{eqn:Holder}
\|A_1A_2\cdots A_n\|_{\L^1}\le \prod_{j=1}^n \|A_j\|_{\L^{p_j}}\quad\text{if}\quad\sum_{j=1}^n p_j^{-1}=1,\;\; 1\le p_j\le\infty.
\end{equation}
When $p_j=\infty$ we may replace the $\L^\infty$ norm by the operator norm, and assume only that $A_j$ is a bounded operator, not necessarily compact. This holds since \cite[Theorem 1]{KyFan} follows from uniform estimates over finite rank operators.

\begin{lemma}\label{lem:tracesum}
If $V\in L^\infty_{\comp}(\R^d)$, then for $\phi\in C^\infty_{\comp}(\R)$ the sum
\begin{equation}\label{eqn:tracesum}
\sum_{k=1}^\infty (-1)^k\int_{-\infty}^\infty \phi(t)\bigl((\Lam \V)^kU_0\bigr)(t)\,dt
\,\equiv\, \sum_{k=1}^\infty (-1)^k\la \phi,W_{k,V}\ra
\end{equation}
converges absolutely to $\la \phi,U_V-U_0\ra$ in the trace-class norm on $H^1\times L^2(\R^d)$.
\end{lemma}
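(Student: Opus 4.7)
The plan is to establish absolute convergence of $\sum_k \|\la\phi,W_{k,V}\ra\|_{\L^1}$ in the trace-class norm; combined with the operator-norm convergence of $\sum_k(-\Lam\V)^kU_0$ to $U_V-U_0$ noted just above, this forces the $\L^1$ limit to agree with $\la\phi,U_V-U_0\ra$. First I would expand each term using iterated Duhamel,
$$
\la\phi,W_{k,V}\ra=\int_\R\phi(t)\int_{T_k(t)}U_0(t-t_1)\V U_0(t_1-t_2)\V\cdots\V U_0(t_k)\,dt_1\cdots dt_k\,dt,
$$
with $T_k(t)=\{0\le t_k\le\cdots\le t_1\le t\}$ for $t\ge 0$ (and the symmetric ordering for $t\le 0$). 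Since $\phi$ is supported in some $[-T,T]$, the simplex has volume $\le T^{k+1}/k!$, and this factorial denominator is what will ultimately drive summability in $k$.

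In the integrand, each $U_0(\tau)$ is unitary on $H^1\times L^2$ by energy conservation (so contributes $1$ in operator norm), and each interior $\V$ has operator norm $\le \|V\|_\infty$. The product is merely bounded, not trace class, so I would bound the $\L^1$ norm via the H\"older inequality \eqref{eqn:Holder} with exponents $(2,\infty,\ldots,\infty,2)$, provided two Hilbert-Schmidt factors can be extracted from the integrand. The enabling estimate is the standard kernel bound
$$
\|V\Psi(|D|)\|_{\L^2}\le C_d\,\|V\|_{L^2}\,\|\Psi(|\cdot|)\|_{L^2(\R^d)},
$$
valid for $V\in L^\infty_{\comp}$ and any radial multiplier $\Psi$, coming directly from the integral kernel $V(x)\check\Psi(x-y)$. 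The two HS factors will be produced from the single outer integration against $\phi$ by Fourier decomposing $\phi(t)=(2\pi)^{-1}\int\hat\phi(\lambda)e^{i\lambda t}d\lambda$ with $\hat\phi\in\mathcal{S}(\R)$; after the change of variables $\tau_0=t-t_1$, $\tau_j=t_j-t_{j+1}$, $\tau_k=t_k$, the scalar $\phi(\sum\tau_j)=(2\pi)^{-1}\int\hat\phi(\lambda)\prod_j e^{i\lambda\tau_j}d\lambda$ decouples the $\tau_j$-integrations, and each $\int_0^\infty e^{i\lambda\tau_j}U_0(\tau_j)d\tau_j$ becomes a resolvent-type operator $i(\lambda\pm|D|+i0)^{-1}P_\pm$. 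A contour deformation in $\lambda$, exploiting the rapid decay of $\hat\phi$, can then be used to replace two of these resolvent factors, one at each end of the operator chain, by genuine Schwartz functions of $|D|$, yielding HS bounds with constants depending only on $\phi$ and $\|V\|_{L^2}$.

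Assembling the H\"older estimate with the simplex-volume bound yields $\|\la\phi,W_{k,V}\ra\|_{\L^1}\le C_{\phi,V,T}^k/k!$, which sums absolutely in $k$, and the lemma follows. The hardest step will be precisely this extraction of two Hilbert-Schmidt factors from a single outer integration against $\phi$: since each $U_0(\tau_j)$ is purely unitary with no smoothing of its own, the scalar Schwartz function $\phi$ must be leveraged at two distinct positions in the operator chain, which requires careful handling of the resolvent-type operators near the real spectrum of $|D|$, where the pre- and post-multiplications by the compactly supported $V$ are essential for converting formal expressions into bona fide trace-class operators on $H^1\times L^2(\R^d)$.
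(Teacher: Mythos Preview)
Your overall strategy---Duhamel expansion, simplex volume $T^k/k!$, and the H\"older inequality \eqref{eqn:Holder}---is sound and matches the paper in spirit. The gap is in the extraction of two Hilbert--Schmidt factors, which you correctly flag as the hardest step but do not actually carry out. After your change of variables and Fourier decomposition of $\phi$, the operator becomes
\[
\frac{1}{2\pi}\int_\R \hat\phi(\lambda)\,R_0(\lambda)\,\V\,R_0(\lambda)\,\V\cdots\V\,R_0(\lambda)\,d\lambda,
\]
with all $k+1$ resolvent-type factors sharing the \emph{same} $\lambda$. Deforming the $\lambda$-contour off the real axis regularizes the $+i0$ but leaves each $R_0(\lambda)$ a rational multiplier in $|D|$ with only $\langle\xi\rangle^{-1}$ or $\langle\xi\rangle^{-2}$ decay, never Schwartz; and since $\phi$ has compact support, $\hat\phi(\lambda)$ grows like $e^{T|\Im\lambda|}$, so you cannot push $\Im\lambda\to\infty$. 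Consequently $V R_0(\lambda)$ is Hilbert--Schmidt only when $\langle\xi\rangle^{-2}\in L^2(\R^d)$, i.e.\ $d\le 3$, and your $(2,\infty,\ldots,\infty,2)$ scheme fails for $d\ge 4$. Nor can the single $\hat\phi(\lambda)$ integral be ``distributed'' to two separate positions in the chain: the factors are coupled through the common $\lambda$, and there is no mechanism here that turns two of them into Schwartz multipliers.

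The paper proceeds differently and dimension-independently. Using finite propagation speed it localizes to a large torus and shows $\|V\Sin(s)\|_{\L^{d+1}}\lesssim \|V\|_{L^\infty}\bigl(t(R+t)^d\bigr)^{1/(d+1)}$ directly from the lattice sum; $d+1$ such factors already give trace class, handling $k\ge d+1$. For $1\le k\le d$ it writes $\Sin(s)=(1-\partial_s^2)\Sin(s)(1-\Delta)^{-1}$ and integrates by parts in the Duhamel time variables to manufacture additional $(1-\Delta)^{-1}$ factors---each in $\L^p$ on the torus for $p>d/2$---until the Schatten exponents sum to $1$. The smoothing thus comes from the iterated time-integrations themselves, exploited one variable at a time, rather than from a single outer pairing with $\phi$.
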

\begin{proof}
Let $R=1+\diam(\supp(V))$, and given $t>0$ let $K$ be a cube of sidelength $\ell(K)=2(R+t)$ that contains all points within distance $t$ of $\supp(V)$.
Then for $|s|<t$ the Schwartz kernel of $V\Sin(s)$ is supported entirely within $K\times K$, so the Schatten-$p$  norm of $V\Sin(s)$ on $L^2(\R^d)$ is the same as that of its restriction to $L^2(K)$.
We use the Fourier series basis for $L^2(K)$ with characters $\xi\in \Xi_K$,
\begin{equation*}
e_\xi=|K|^{-1/2}e^{i\langle x,\xi\rangle}\,,\qquad\xi\in\Xi_K=2\pi\, \ell(K)^{-1}\Z^d.
\end{equation*}
In this basis, we have
$$
V\Sin(s)e_\xi=\frac{\sin(s|\xi|)}{|\xi|}Ve_\xi.
$$
Thus $V\Sin(s)$ factors into $V$ times a diagonal operator, and we have
\begin{equation}\label{eqn:Schattenbound}
\begin{split}
\bigl\|V\Sin(s)\|_{\mathcal{L}^{d+1}}&\le\|V\|_{L^\infty}\Biggl(\,\sum_{\xi\in\Sigma_K}\biggl|\frac{\sin(s|\xi|)}{|\xi|}\biggr|^{d+1}\Biggr)^{1/(d+1)}\\
&\le C_d\|V\|_{L^\infty}\Bigl(t(R+t)^d\Bigr)^{1/(d+1)}.
\end{split}
\end{equation}
Here we use the bound
$$
\biggl|\frac{\sin(s|\xi|)}{|\xi|}\biggr|\le \frac{2s}{1+s|\xi|}\,,
$$
and we use the fact that $\ell(K)^{-1}s\le 1$ to estimate the sum by an integral.

We first show that the lower right corner $\la \phi,W_{k,V}^{2,2}\ra$ of the $2\times 2$-block matrix in \eqref{eqn:tracesum} converges in the trace class norm on $L^2$. All terms in the sum for this term are even in $t$, so we may assume that $\phi(t)$ is even in $t$.
Let $d^k\! s=ds_1\cdots ds_k$.
Then $\la \phi,W_{k,V}^{2,2}\ra$ can be written as
\begin{multline*}
2\int_{\substack{s_1+\cdots s_k < t\\ s_1,\ldots,s_k>0}}\phi(t)\Cos(t-(s_1+\cdots +s_k))V\Sin(s_k)V\cdots V \Sin(s_1)\,d^k\! s\,dt
\\
=2\int_{\R_+^{k+1}}\phi(s_1+\cdots+s_{k+1})\Cos(s_{k+1})V\Sin(s_k)V\cdots V \Sin(s_1)\,d^{k+1}\!s.
\end{multline*}
First consider the case $k\ge d+1$. For $0\le s\le t$ we have the bound
$$
\|\one_{\supp(V)}\Sin(s)\one_{\supp(V)}\|_{L^2(K)\rightarrow L^2(K)}\le (C_dR)\wedge t,
$$
with $R=1+\diam(\supp(V))$ as above.
From \eqref{eqn:Holder} and \eqref{eqn:Schattenbound}, we conclude that, with $\mathcal{L}^1$ denoting the trace-class norm,
\begin{multline*}
\biggl\|\int_{\substack{s_1+\cdots s_k < t\\ s_1,\ldots,s_k>0}}\Cos(t-(s_1+\cdots +s_k))V\Sin(s_k)V\cdots V \Sin(s_1)\,d^k\!s\biggr\|_{\mathcal{L}^1}\\
\le
\frac{C_d\|V\|_{L^\infty}^k(R+t)^dt^{k+1}\bigl((C_dR)^{k-d-1}\wedge t^{k-d-1}\bigr)}{k!}.
\end{multline*}
We conclude that if $k\ge d+1$ then
$W_{k,V}^{2,2}$ is trace-class, with $\L^1$ norm given by a continuous function $\beta_{k,V}(t)$ satisfying 
\begin{equation}\label{eqn:akbound}
|\beta_{k,V}(t)|\le 
\biggl( \frac{C_d\ts R^d \ts t^{2k-d}\ts \|V\|_{L^\infty}^k}{k!}\biggr)\wedge\biggl(\biggl(\frac t R\biggr)^{d+1}\frac{\bigl(C_d\|V\|_{L^\infty}R\, t\bigr)^k}{k!}\,\biggr).
\end{equation}
The sum of $W_{k,V}^{2,2}$ over $k\ge d+1$ thus converges absolutely in the trace class.

For later use we note that
\begin{equation}\label{eqn:aksumbound}
\sum_{k=d+1}^\infty |\beta_{k,V}(t)|\le 
\biggl(\frac t R\biggr)^{d+1}e^{C_dR\ts t\ts\|V\|_{L^\infty}}.
\end{equation}
It remains to show that $\la\phi,W_{k,V}^{2,2}\ra$ is trace class for $k\le d$. Suppose that $\supp(\phi)\subset [-T,T]$, and let $K$ be the cube of sidelength $2(T+R)$, which we identify with the torus by imposing periodic boundary conditions on $\Delta$. In bounding the $\L^1$ norm of $\la\phi,W_{k,V}^{2,2}\ra$, we may then assume we are working with the wave equation on $K$.

We write
$$
\Sin(s)=(1-\partial_s^2)\Sin(s)\,(1-\Delta)^{-1}.
$$
Integration by parts in $s_1$ shows that $\frac 12\la\phi,W_{k,V}^{2,2}\ra$ equals the following,
\begin{multline*}
\int_{\R_+^{k+1}}(\phi-\phi'')(s_1+\cdots +s_{k+1})\Cos(s_{k+1})V\Sin(s_k)\cdots V \Sin(s_1)\,d^{k+1}\!s\cdot(1-\Delta)^{-1}\\
+
\int_{\R_+^k}\phi(s_2+\cdots +s_{k+1})\Cos(s_{k+1})V\Sin(s_k)\cdots V \Sin(s_2)\, d^k\!s \cdot V(1-\Delta)^{-1}.
\end{multline*}
We repeat this on each term, stopping when the total number of $\Sin(s_j)$ factors plus twice the number of $(1-\Delta)^{-1}$ factors in a term is at least $d+1$. Thus, we stop after at most $d+1-k$ steps. 

If $2k\le d+1$, there will arise terms of the form
$\widehat{\phi^{(2m)}}\bigl(\sqrt{-\Delta}\bigr)Q$ with $2m\le d+1-2k$, and $Q$ consisting of a mixed product of $k$ factors of $V$ and $m+k$ factors of $(1-\Delta)^{-1}$, hence
$$
\|Q\|_{\L^p}\le C_d\|V\|_{L^\infty}^k\bigl(T+R\bigr)^{d/p},\quad \text{if}\;\; p=(d+1)/(2m+2k).
$$
For this value of $p$, we can bound
$$
\|\widehat{\phi^{(2m)}}\bigl(\sqrt{-\Delta}\bigr)\|_{\L^{p'}}\le C_d\bigl(T+R\bigr)^{d/p'}\bigl(\,\|\phi^{(2m)}\|_{L^1}+\|\phi^{(d+1-2k)}\|_{L^1}\bigr).
$$

The terms that involve an integral over $s$ are bounded as for $k\ge d+1$, and together we conclude that, if $\supp(\phi)\subset [-T,T]$, and $1\le k\le d$,
\begin{equation}\label{eqn:trphiWk}
\bigl\|\la\phi,W_{k,V}\ra\bigr\|_{\mathcal{L}^1}\le C_d\,(T+R)^{d+1}\,\|V\|_{L^\infty}^k\sum_{n=0}^{d+1-k}\int_\R |\phi^{(n)}(t)|\,dt.
\end{equation}
This concludes the proof of $\L^1$ summability over $k$ of $\la\phi,W_{k,V}^{2,2}\ra$.

A similar proof shows that each of the following is absolutely summable over $k$ in the trace class norm on $L^2(\R^d)$,
$$
\la D\ra\la\phi,W_{k,V}^{1,1}\ra\la D\ra^{-1},\qquad \la D\ra\la\phi,W_{k,V}^{1,2}\ra,\qquad \la\phi,W_{k,V}^{2,1}\ra\la D\ra^{-1},
$$
with the same bound \eqref{eqn:akbound} on the $\mathcal{L}^1$ norm of $W^{i,j}_{k,V}$ if $k\ge d+1$.
It follows that the sum of $\la\phi,W_{k,V}\ra$ converges absolutely in the trace class on $H^1\times L^2$.
\end{proof}


The trace of $\la\phi,W_{k,V}\ra$ is thus well defined, and the sum in \eqref{eqn:tracesum} converges in the trace class to $\la\phi,U_V-U_0\ra$. We next note that
$$
\trace_{H^1\times L^2}\la\phi,W_{k,V}\ra=
\trace_{H^1}\la\phi,W_{k,V}^{1,1}\ra+
\trace_{L^2}\la\phi,W_{k,V}^{2,2}\ra
=
2 \trace_{L^2}\la\phi,W_{k,V}^{1,1}\ra.
$$
The second inequality follows by noting that $W^{1,1}_{k,V}$ is the $L^2$ adjoint of $W^{2,2}_{k,V}$, and the trace of $W^{1,1}_{k,V}$ over $H^1(\R^d)$ is the same as the trace over $L^2(\R^d)$.

Since $W_{k,V}^{1,1}$ is even in $t$, we can assume that $\phi(t)$ is an even function of $t$  when deriving a formula for $\trace\la \phi,W_{k,V}\ra$.
We will establish the other parts of Theorem \ref{thm:main} as a consequence of the following three results.

\begin{theorem}\label{thm:traceW1}
Assume that $V\in L^\infty_{\comp}(\R^d)$ and $\phi\in C_{\comp}^\infty(\R)$ is even. Then
$$
\trace\la\phi, W_{1,V}\ra=\nu_d(\phi)\int V(x)\,dx,
$$
where, with
$c_d= 2^{3-d}\pi^{1-\frac d2}/\Gamma(\frac d2-1)$, and $D=-i\partial_t$,
$$
\nu_d(\phi)=
\begin{cases}
\displaystyle 2\int_0^\infty t\ts\phi(t)\,dt,& d=1,\\
\displaystyle 2\pi^{-1}\int_0^\infty \phi(t)\,dt,& d=2,\rule{0pt}{20pt}\\
c_d\bigl(D^{d-3}\phi\bigr)(0),& d\ge 3\;\;\text{odd},\rule{0pt}{17pt}\\
c_d\bigl(|D|^{d-3}\phi\bigr)(0),& d\ge 4\;\;\text{even}.\rule{0pt}{17pt}
\end{cases}
$$
For $d>1$ one also has the formulas,
$$
\nu_d(\phi)=
\begin{cases}
\displaystyle 2(-2\pi)^{\frac{1-d}2} \int_0^\infty 
\partial_t(t^{-1}\partial_t)^{\frac{d-3}2}\phi(t)\,dt,& d\ge 3\;\;\text{odd},\\
\displaystyle -4(-2\pi)^{-\frac d2} \int_0^\infty 
(t^{-1}\partial_t)^{\frac{d-2}2}\phi(t)\,dt,& d\ge 2\;\;\text{even}.\rule{0pt}{20pt}
\end{cases}
$$
\end{theorem}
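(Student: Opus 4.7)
My plan is to reduce the matrix-valued trace $\trace\langle\phi,W_{1,V}\rangle$ to the pairing of $\phi$ with an explicit scalar distribution in $t$, then evaluate that distribution in each dimension. From the block structure of $\V$ and $U_0$, the $(1,1)$-entry of $W_{1,V}(t)=\Lam\V U_0(t)$ is $W_{1,V}^{1,1}(t)=\int_0^t\Sin(t-s)V\Cos(s)\,ds$, and the reduction carried out earlier in Section~2 has already shown that $\trace\langle\phi,W_{1,V}\rangle = 2\trace_{L^2}\langle\phi,W_{1,V}^{1,1}\rangle$, so it suffices to understand the $L^2$-trace.

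Translation invariance of $\Sin$ and $\Cos$ together with cyclicity under the trace gives $\trace_{L^2}(\Sin(t-s)V\Cos(s))=\bigl(\int V\bigr)K_{\Cos(s)\Sin(t-s)}(0,0)$, and the product-to-sum identity $\cos(a)\sin(b)=\tfrac12[\sin(a+b)+\sin(b-a)]$ applied with $a=s|\xi|$, $b=(t-s)|\xi|$ reduces the diagonal kernel to $\tfrac12[E_d(t)+E_d(t-2s)]$, where $E_d(t):=\int\frac{\sin(t|\xi|)}{|\xi|}\,\frac{d\xi}{(2\pi)^d}$ is the value of the Schwartz kernel of $\Sin(t)$ on the diagonal. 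Integrating in $s\in(0,t)$, the second piece vanishes under the substitution $u=t-2s$ together with oddness of $E_d$, leaving $\trace W_{1,V}^{1,1}(t)=\tfrac{t}{2}E_d(t)\int V$. Pairing with $\phi$ therefore yields $\trace\langle\phi,W_{1,V}\rangle = \bigl(\int V\bigr)\int\phi(t)\,tE_d(t)\,dt$, and so $\nu_d(\phi)$ is the pairing of $\phi$ against the even distribution $tE_d(t)$.

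The remaining task is to evaluate $tE_d(t)$ explicitly per dimension. In spherical coordinates, $E_d(t)=\frac{|\sph^{d-1}|}{(2\pi)^d}\int_0^\infty r^{d-2}\sin(tr)\,dr$, and I interpret the divergent $r$-integral as a tempered distribution in $t$ by differentiating $\int_0^\infty\cos(tr)\,dr=\pi\delta(t)$ or $\int_0^\infty\sin(tr)\,dr=\mathrm{p.v.}(1/t)$ the appropriate number of times. This gives $E_d=\tfrac12\mathrm{sign}(t)$ for $d=1$, $E_d=1/(2\pi t)$ (principal value) for $d=2$, a constant multiple of $\delta^{(d-2)}(t)$ for odd $d\ge 3$, and a constant multiple of $\partial_t^{d-2}\mathrm{p.v.}(1/t)$ for even $d\ge 4$. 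Multiplying by $t$ and invoking $t\delta^{(k)}(t)=-k\delta^{(k-1)}(t)$ (and the analogous identity for the principal value) lowers the order of the distribution by one, and pairing the result against the even test function $\phi$ produces the first set of formulas for $\nu_d(\phi)$; the constant $c_d=2^{3-d}\pi^{1-d/2}/\Gamma(d/2-1)$ emerges on combining $|\sph^{d-1}|=2\pi^{d/2}/\Gamma(d/2)$ with the factor $d-2$ that comes from $t\delta^{(d-2)}$ and the identity $\Gamma(d/2)=(d/2-1)\Gamma(d/2-1)$. The second (integral) form of $\nu_d(\phi)$ follows from the dimensional recursion $E_d(t)=-(2\pi(d-2))^{-1}\partial_t^2 E_{d-2}(t)$, obtained by comparing the spherical representations in dimensions $d$ and $d-2$ via $|\sph^{d-1}|/|\sph^{d-3}|=2\pi/(d-2)$; iterating the recursion down to the base cases $d\in\{1,2\}$ and integrating by parts against $\phi$ converts the resulting chain of second derivatives into the announced $\partial_t(t^{-1}\partial_t)^{(d-3)/2}$ or $(t^{-1}\partial_t)^{(d-2)/2}$ form.

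The main technical obstacle is the distributional bookkeeping in the evaluation of $tE_d(t)$: rigorously interpreting $\int_0^\infty r^{d-2}\sin(tr)\,dr$ as a tempered distribution (which I would do by the Sokhotski--Plemelj limit of $(\epsilon-it)^{-(d-1)}$ as $\epsilon\to 0^+$), tracking the signs through the repeated derivatives of $\delta$ or $\mathrm{p.v.}(1/t)$, and checking that the direct spherical evaluation and the iterated recursion produce the same explicit constants. Once this distribution-theoretic framework is in place, the remaining manipulations are routine.
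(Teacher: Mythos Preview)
Your reduction is essentially the paper's: both arguments pair $\psi(t)=t\phi(t)$ against the diagonal kernel of $\Sin(t)$. The paper reaches this by specializing its general $k$-fold formula (Lemma~3.1, obtained via cyclicity and the symmetrization $s_1\leftrightarrow s_{k+1}$) to $k=1$, landing on $\nu_d(\phi)=\mathrm{const}\cdot\int_0^\infty i\tau^{d-2}\hat\psi(\tau)\,d\tau$; you arrive at the same object by the direct product-to-sum computation on $\Sin(t-s)V\Cos(s)$. Your distributional evaluation of $E_d$ via $\int_0^\infty r^{d-2}\sin(tr)\,dr$ is exactly dual to the paper's Fourier-side evaluation of $\int_0^\infty i\tau^{d-2}\hat\psi(\tau)\,d\tau$, so for the first set of formulas the two proofs coincide up to bookkeeping.

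For the second (integral) set of formulas your route diverges from the paper's, and the recursion step as written has a gap. The paper does not iterate a dimensional recursion on $E_d$; instead it observes that for odd $d$ both $\nu_d(\phi)$ and $\int_0^\infty\partial_t(t^{-1}\partial_t)^{(d-3)/2}\phi\,dt$ are constants times $\phi^{(d-3)}(0)$ and matches the constants via the Taylor expansion of $\phi$, while for even $d$ it uses the identity $t\partial_t|D|^k\delta=-(k+1)|D|^k\delta$ to relate $\int_0^\infty(t^{-1}\partial_t)^{(d-2)/2}\phi\,dt$ to $|D|^{d-3}\phi(0)$. Your recursion $E_d=-(2\pi(d-2))^{-1}\partial_t^2E_{d-2}$ is correct, but ``integrating by parts against $\phi$'' does not convert $\partial_t^2$-chains into $(t^{-1}\partial_t)$-chains: two integrations by parts produce $\partial_t^2(t\phi)=t\phi''+2\phi'$, and the extra $2\phi'$ term does not collapse to $t\cdot(t^{-1}\partial_t)\phi$. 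The recursion does let you compute the constant in front of $\delta^{(d-2)}$ (odd) or $\partial_t^{d-2}\mathrm{p.v.}(1/t)$ (even), after which you can verify the second form by the same constant-matching the paper uses; but the recursion by itself does not deliver the $(t^{-1}\partial_t)$ structure.
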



\begin{theorem}\label{thm:traceW2}
Assume that $V\in L^\infty_{\comp}(\R^d)$. If $\phi\in C_{\comp}^\infty(\R)$ is even, then one can write
\begin{equation*}
\trace\la\phi,W_{2,V}\ra=
\begin{cases}
\displaystyle\int_0^\infty t^{4-d}a_{2,V}(t)\,\phi(t)\,dt,& 1\le d\le 4,\\
\displaystyle\int_0^\infty a_{2,V}(t)\,\partial_t(t^{-1}\partial_t)^{\frac{d-5}2}\phi(t)\,dt,& d\ge 5\;\text{odd},\rule{0pt}{20pt}\\
\displaystyle\int_0^\infty a_{2,V}(t)\,(t^{-1}\partial_t)^{\frac{d-4}2}\phi(t)\,dt,& d\ge 4\;\text{even},\rule{0pt}{20pt}
\end{cases}
\end{equation*}
where $a_{2,V}(t)\in C\bigl(\Rbarplus)$, and $|a_{2,V}(t)|\le (2\pi)^{-\lfloor\frac d2\rfloor}\|V\|_{L^2}^2$.
If $m$ is an integer and $V\in L^\infty_{\comp}\cap H^m(\R^d)$, then
$a_{2,V}(t)\in C^{2m}(\Rbarplus)$, and 
$$
\|a_{2,V}\|_{C^{2m}(\R)}\le C_{m,d}\, \|V\|_{H^m}^2.
$$
Additionally, for $0\le j\le 2m$,
\begin{equation}\label{eqn:traceW2}
\partial_t^ja_{2,V}(0)=\begin{cases} 0,& \;j\,\text{odd},\\
c_{j,d}\,\||D|^j V\|_{L^2}^2,& \;j\,\text{even},
\end{cases}
\end{equation}
where $c_{j,d}\ne 0$.

Conversely, if $V\in L^\infty_{\comp}(\R^d)$ is real valued, and there are constants $b_j\in \C$ and $C<\infty$ so that
\begin{equation}\label{eqn:traceW2'}
\biggl|a_{2,V}(t)-\sum_{j=0}^{m-1} b_j \ts t^{2j}\biggr|\le C\,t^{2m}\quad\text{for}\quad 0\le t\le 1,
\end{equation}
then $V\in H^{m}(\R^d)$, and hence \eqref{eqn:traceW2} holds.
\end{theorem}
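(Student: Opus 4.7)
The plan is to derive an explicit spectral representation for $a_{2,V}(t)$ from which both the forward regularity claims and the converse follow. Beginning from $\trace\langle\phi,W_{2,V}\rangle = 2\,\trace_{L^2}\langle\phi,W_{2,V}^{1,1}\rangle$ with the simplex representation of $W_{2,V}^{1,1}$, I would use cyclicity of the trace together with the identity $\Cos(\tau_1)\Sin(\tau_3) = \hf[\Sin(\tau_1+\tau_3)+\Sin(\tau_3-\tau_1)]$. The change of variables $(s,r,u)=(\tau_1+\tau_3,\tau_3-\tau_1,\tau_2)$ kills the odd-in-$r$ contribution upon integrating $\Sin(r)$ over $r\in(-s,s)$, collapsing the simplex integral to
\[
\trace\langle\phi,W_{2,V}\rangle = \int_0^\infty\!\!\int_0^\infty s\,\phi(s+u)\,\trace(V\Cos(u)V\Sin(s))\,ds\,du.
\]
The spectral identity $\trace(VFVG) = (2\pi)^{-2d}\iint F(|\xi_1|)G(|\xi_2|)|\hat V(\xi_2-\xi_1)|^2 d\xi_1 d\xi_2$ rewrites this as a quadratic form in $\hat V$; after polar coordinates and angular integration I introduce $S_V(\rho)=\rho^{d-1}\int_{\sph^{d-1}}|\hat V(\rho\omega)|^2\,d\omega\ge 0$, reducing the trace to $\int_0^\infty S_V(\rho)N_\phi(\rho)\,d\rho$.

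The three cases of the theorem correspond to different ways of writing $N_\phi$ in terms of $\phi$ and its $t$-derivatives. For $1\le d\le 4$, straightforward manipulation isolates $\int_0^\infty t^{4-d}a_{2,V}(t)\phi(t)\,dt$; for $d\ge 5$, the singular weight $t^{4-d}$ is traded for the differential operators in the theorem via repeated application of $\partial_s\sin(s\rho)=\rho\cos(s\rho)$ and the analogous identities that convert $(s^{-1}\partial_s)$ into radial derivatives on the spectral side. In each case one identifies
\[
a_{2,V}(t) = \kappa_d\int_0^\infty S_V(\rho)\,\Phi_d(t\rho)\,d\rho,
\]
where $\Phi_d\in C^\infty(\R)$ is an explicit even function with $\Phi_d(0)=1$ and $|\Phi_d(s)|\le 1$. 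The uniform bound $|a_{2,V}(t)|\le(2\pi)^{-\lfloor d/2\rfloor}\|V\|_{L^2}^2$ then follows from $\|\Phi_d\|_\infty\le 1$ and Plancherel, and continuity on $\Rbarplus$ from dominated convergence using $V\in L^2$.

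For $V\in H^m$, $\rho^{2j}S_V(\rho)$ is integrable for $0\le j\le m$, which justifies $2m$-fold differentiation under the integral and yields $a_{2,V}\in C^{2m}(\Rbarplus)$ with $\|a_{2,V}\|_{C^{2m}}\le C_{m,d}\|V\|_{H^m}^2$. Odd $t$-derivatives at $0$ vanish by evenness of $\Phi_d$, while
\[
\partial_t^{2j}a_{2,V}(0) = \kappa_d\,\Phi_d^{(2j)}(0)\int_0^\infty\rho^{2j}S_V(\rho)\,d\rho = c_{j,d}\,\||D|^j V\|_{L^2}^2,
\]
with $c_{j,d}\ne 0$ because $\Phi_d^{(2j)}(0)\ne 0$, to be verified from the explicit form of $\Phi_d$.

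For the converse, assume $|a_{2,V}(t)-\sum_{j<m}b_j t^{2j}|\le Ct^{2m}$ on $[0,1]$ and argue by induction on $m$. At the inductive step $V\in H^{m-1}$ is already known, so the Taylor polynomial of $\Phi_d$ to order $2m-2$ integrates against $S_V$ to produce precisely $\sum_{j<m}b_j t^{2j}$ (the $b_j$ being forced to match the Taylor coefficients by uniqueness), and the hypothesis reduces to
\[
\Bigl|\kappa_d\int_0^\infty S_V(\rho)\,R_m(t\rho)\,d\rho\Bigr|\le C't^{2m},\qquad R_m(s) := \Phi_d(s)-\sum_{j<m}\frac{\Phi_d^{(2j)}(0)}{(2j)!}s^{2j},
\]
with $R_m(s) = \Phi_d^{(2m)}(0)s^{2m}/(2m)! + O(s^{2m+1})$ near $0$. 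Positivity $S_V\ge 0$ (from reality of $V$) combined with nonvanishing of $\Phi_d^{(2m)}(0)$ permits a Fatou-type argument on $R_m(t\rho)/t^{2m}$, whose pointwise $t\to 0^+$ limit is a nonzero constant times $\rho^{2m}S_V(\rho)$, yielding $\int_0^\infty\rho^{2m}S_V(\rho)\,d\rho<\infty$ and hence $V\in H^m$. The hard part is controlling the tail contribution from $\rho\gg 1/t$, where $\Phi_d(t\rho)$ bears no simple relation to its Taylor polynomial; I expect this to be handled using $\|\Phi_d\|_\infty<\infty$ together with the inductive bound $V\in H^{m-1}$ to absorb the tail into the right-hand side $C't^{2m}$.
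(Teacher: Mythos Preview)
Your spectral representation
\[
a_{2,V}(t)=\kappa_d\int_0^\infty S_V(\rho)\,\Phi_d(t\rho)\,d\rho,\qquad S_V(\rho)=\rho^{d-1}\!\int_{\sph^{d-1}}|\widehat V(\rho\omega)|^2\,d\omega\ge 0,
\]
is exactly the form the paper arrives at (there $\Phi_d$ is $\mathrm{sinc}(s/2)$ for $d$ odd and $\int_0^1\cos\bigl(s\sqrt{u-u^2}\bigr)\,du$ for $d$ even, up to sign), and your forward argument for continuity, the $C^{2m}$ bound, and the values $\partial_t^{2j}a_{2,V}(0)$ is the same as the paper's.

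The gap is in the converse. Your proposed tail control---using $\|\Phi_d\|_\infty<\infty$ together with $V\in H^{m-1}$---does not close. On $\{\rho>A/t\}$ one has $|R_m(t\rho)|\le |\Phi_d(t\rho)|+C(t\rho)^{2m-2}$, and both pieces, after integrating against $S_V$, are only $O(t^{2m-2})$ (the first by Chebyshev against $\rho^{2m-2}S_V\in L^1$, the second directly). This is too weak by a factor $t^{-2}$ to be absorbed into the hypothesis $C't^{2m}$, so the Fatou step cannot be isolated on $\{\rho\le A/t\}$.

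What the paper uses instead is that the remainder is \emph{globally} sign-definite: for the specific kernels here one has $(-1)^m R_m(s)\ge 0$ for every $s\ge 0$, not merely for small $s$. For $\cos$ this follows by induction from $\tilde R_m''=-\tilde R_{m-1}$ with $\tilde R_m(0)=\tilde R_m'(0)=0$; the $\mathrm{sinc}$ case is then $\int_0^1\tilde R_m(su)\,du$. With $(-1)^m R_m\ge 0$ the integrand $(-1)^m S_V(\rho)R_m(t\rho)/t^{2m}$ is nonnegative for every $t$ and $\rho$, Fatou applies on all of $(0,\infty)$ with no tail to handle, and one reads off $\int\rho^{2m}S_V(\rho)\,d\rho<\infty$ directly. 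You should replace the tail discussion by this observation.
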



\begin{theorem}\label{thm:traceWk}
Assume that $V\in X_d$, and $\phi\in C_{\comp}^\infty(\R)$ is an even function. Then if $k\ge 3$ and $2k\ge d$, there exists $a_{k,V}\in C\bigl(\Rbarplus)$ such that
$$
\trace\la\phi,W_{k,V}\ra=\int_0^\infty t^{2k-d}a_{k,V}(t)\,\phi(t)\,dt.
$$
If $k\ge 3$ and $2k<d$,
there exists $a_{k,V}\in C\bigl(\Rbarplus)$ such that
\begin{equation}\label{eqn:traceWk'}
\trace\la\phi,W_{k,V}\ra=
\begin{cases}
\displaystyle\int_0^\infty a_{k,V}(t)\,\partial_t\bigl(t^{-1}\partial_t\bigr)^{\frac{d-2k-1}2}\phi(t)\,dt,& d\ge 7\;\text{odd},\\
\displaystyle\int_0^\infty a_{k,V}(t)\,\bigl(t^{-1}\partial_t\bigr)^{\frac{d-2k}2}\phi(t)\,dt,& d\ge 6\;\text{even}\rule{0pt}{20pt}.
\end{cases}
\end{equation}
In each of the above cases, if $m\ge 0$ is an integer and $V\in X_d\cap H^m(\R^d)$, then
$a_{k,V}\in C^{2m}(\Rbarplus)$, and $\partial_t^j a_{k,V}(0)=0$ if $j<2m$ is odd.
\end{theorem}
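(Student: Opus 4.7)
The strategy is to compute $\trace\la\phi,W_{k,V}\ra=2\trace\la\phi,W_{k,V}^{1,1}\ra$ via a Fourier-side representation and then perform the time integration to isolate the $\phi$-dependence. Representing $W_{k,V}^{1,1}(t)$ as the simplex integral of $\Sin(t_k)V\Sin(t_{k-1})V\cdots V\Sin(t_1)V\Cos(t_0)$ over $\{t_j\ge 0,\;t_0+\cdots+t_k=t\}$, and substituting the Fourier kernels of $\Cos(s)$ and $\Sin(s)$, the trace yields
\begin{equation*}
\trace\la\phi,W_{k,V}^{1,1}\ra = (2\pi)^{-dk}\!\int\!\phi(t_0+\cdots+t_k)\cos(t_0|\xi_k|)\prod_{j=1}^k\frac{\sin(t_j|\xi_j|)}{|\xi_j|}\prod_{j=1}^k\widehat V(\xi_j-\xi_{j-1})\,dt\,d\xi,
\end{equation*}
with the convention $\xi_0\equiv\xi_k$ (arising from the diagonal $x$-integral in the trace) and the $z_j$-integrals of $V$ producing the factors $\widehat V(\xi_j-\xi_{j-1})$. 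The hypothesis $V\in X_d$ --- $V\in L^\infty_{\comp}$ in low dimensions, $\widehat V\in L^1$ for $d\ge 4$ --- supplies the integrability needed to combine $\prod\widehat V$ with the time kernel.

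The key step is evaluating the time integration. The Laplace transform in $t$ of $\trace W_{k,V}^{1,1}(t)$ factors as
\begin{equation*}
(2\pi)^{-dk}\int\frac{z}{(z^2+|\xi_k|^2)^2}\prod_{j=1}^{k-1}\frac{1}{z^2+|\xi_j|^2}\prod_{j=1}^k\widehat V(\xi_j-\xi_{j-1})\,d\xi,
\end{equation*}
and inverting (or equivalently, iterating the product-to-sum identity $2\cos a\sin b=\sin(b+a)+\sin(b-a)$) rewrites the simplex time integral as a sum of sines and cosines evaluated at $t\cdot(\pm|\xi_1|\pm\cdots\pm|\xi_k|)$. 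Rescaling $\xi_j\mapsto t^{-1}\xi_j$ then exposes a factor $t^{2k-d}$ and yields a continuous function $a_{k,V}(t)$ whenever $2k\ge d$. In the regime $2k<d$ the rescaled integral diverges; instead, integrate by parts in $\phi$ using the operators $\partial_t(t^{-1}\partial_t)^{(d-2k-1)/2}$ or $(t^{-1}\partial_t)^{(d-2k)/2}$, which are precisely those that implement the radial Fourier transform in dimension $d-2k$. Under the $X_d$-hypothesis the resulting $\xi$-integral converges absolutely and defines $a_{k,V}\in C(\Rbarplus)$.

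For the regularity claim under $V\in H^m$, differentiate the integral representation for $a_{k,V}$ in $t$: each derivative acts on the trigonometric factors and brings down a polynomial in the $|\xi_j|$ of degree equal to the derivative order. To close the estimate by $C_{k,m,d}\|V\|_{H^m}^k$, use that $\sum_{j=1}^k(\xi_j-\xi_{j-1})=0$ forces $|\xi_{j_0}-\xi_{j_0-1}|\gtrsim k^{-1}\max_j|\xi_j|$ for some $j_0$, then redistribute the Sobolev weight $|\xi|^{2m}$ among the $\widehat V$ factors by Cauchy--Schwarz. The vanishing of $\partial_t^{j}a_{k,V}(0)$ for odd $j<2m$ is forced by the evenness of $\trace(U_V-U_0)$ as a distribution on $\R$: any odd-power term in the Taylor expansion of $a_{k,V}$ would contribute an odd-power term to $t^{2k-d}a_{k,V}(t)$, incompatible with even symmetry. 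The principal obstacle is the regime $2k<d$: verifying that the formal integration by parts against $\phi$ using $(t^{-1}\partial_t)^\ell$ produces an absolutely convergent $\xi$-integral that is genuinely continuous on $\Rbarplus$ and remains so after up to $2m$ additional $t$-derivatives.
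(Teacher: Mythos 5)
Your proposal has genuine gaps at the three places where the actual work lies. First, the mechanism for producing $t^{2k-d}a_{k,V}(t)$ is not sound as described. Expanding the simplex time integral by partial fractions (equivalently by iterating $2\cos a\sin b=\sin(b+a)+\sin(b-a)$, or by inverting the Laplace transform) produces terms with resonant denominators of the type $\prod_{i\ne j}\bigl(|\xi_i|^2-|\xi_j|^2\bigr)^{-1}$, whose singularities cancel only in the full sum; term by term the $\xi$-integrals against $\prod\widehat V(\xi_j-\xi_{j-1})$ are not absolutely convergent, so the subsequent rescaling, dominated convergence, and differentiation under the integral are not justified. Moreover the global rescaling $\xi_j\mapsto t^{-1}\xi_j$ does not ``expose $t^{2k-d}$'': it produces $t^{-dk}$ from the measure and simultaneously distorts the arguments of $\widehat V$. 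In the paper the factor $t^{2k-d}$ arises only after (i) repackaging the resonant sum into the single bounded kernel $G_{k-\frac12}\bigl(t^2\sum_j s_j|\xi_j|^2\bigr)$ via the divided-difference identity (Lemma \ref{lem:multform2}), (ii) the change of variables $\eta_1=\bfs\cdot\xi$, $\eta_j=\xi_j-\xi_1$, and (iii) integrating out only the radial $\eta_1$-variable using the $G_\nu$ recursions \eqref{eqn:Gdiff}--\eqref{eqn:Gint}; absolute convergence in the remaining variables then comes from Lemma \ref{lem:Vhatprod} (two factors in $L^2$, $k-2$ in $\hatL$), and the same calculus gives the $2k<d$ case after transferring $(t^{-1}\partial_t)$-powers to $\phi$. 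Second, and more seriously, your argument is entirely Fourier-side and therefore cannot cover $d\le 3$, where $X_d=L^\infty_{\comp}$ and $\widehat V$ is only in $L^2$: for $k\ge 3$ the chained integral $\int\prod_j|\widehat V(\xi_j-\xi_{j-1})|\,d\xi_2\cdots d\xi_k$ need not converge, which is precisely why the paper builds the separate physical-space representation of Theorem \ref{thm:spatial} (a compactly supported positive measure $d\sigma_k$, explicit propagator kernels in $d=1,2,3$), and proves $C^{2m}$ regularity there by difference quotients, a cyclic relabeling to move derivatives between factors, and Gagliardo--Nirenberg, since only two factors can be estimated in $L^2$-based norms.

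Two further points. Your parity argument is incorrect: evenness of $\trace(U_V-U_0)$ as a distribution on $\R$, together with one-sided regularity of $a_{k,V}$ on $\Rbarplus$, does not force odd Taylor coefficients to vanish --- for example $a(t)=t$ gives $t^{2k-d}a(t)$ whose even extension $|t|^{2k-d+1}$ is a perfectly admissible even distribution. The vanishing in the paper comes from the explicit formulas: the kernels $G_{k-\frac{d+1}2}\bigl(t^2Q_{k,\bfs}(\eta')\bigr)$, $\cos\bigl(t\sqrt{Q_{k,\bfs}}\bigr)$, $J_0\bigl(t\sqrt{Q_{k,\bfs}}\bigr)$ are functions of $t^2$, so $a_{k,V}$ extends evenly with two-sided $C^{2m}$ regularity. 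Finally, your claim that $\sum_j(\xi_j-\xi_{j-1})=0$ forces $|\xi_{j_0}-\xi_{j_0-1}|\gtrsim k^{-1}\max_j|\xi_j|$ is false (all $\xi_j$ may be equal and arbitrarily large); the correct and needed statement concerns the differences $\theta_j=\xi_{j+1}-\xi_j$, and in the paper the $t$-derivatives bring down powers of $Q_{k,\bfs}(\eta')^{1/2}\le\sum_i|\theta_i|$, which are then distributed so that at most two $\widehat V$-factors carry Sobolev weight, exactly as in Lemma \ref{lem:Vhatprod} and \eqref{eqn:Qkbound}.
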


We show here how Theorem \ref{thm:main} follows from the above three theorems, together with bounds \eqref{eqn:alphabound} and \eqref{eqn:alphabound3}. It follows from Lemma \ref{lem:tracesum} that the operator $\la\phi,U_V-U_0\ra$ is trace class, and the proof of Lemma \ref{lem:tracesum} shows that its trace determines a distribution in $\phi$, and that
$$
\trace\la\phi,U_V-U_0\ra=\sum_{k=1}^\infty (-1)^k\trace\la\phi,W_{k,V}\ra.
$$

If $d\le 4$, then the expression for $\mu_{d,V}(\phi)$ holds with  
$$\alpha_V(t)=\sum_{k=2}^\infty (-1)^ka_{k,V}(t)t^{2k-4}.
$$
The bounds \eqref{eqn:alphabound} and \eqref{eqn:alphabound3} show that the sum converges absolutely in $C^{2m}$ when $V\in X_d\cap H^m$, and that
\begin{equation}\label{eqn:alphajbound}
|\partial_t^j\alpha_V(t)|\le C_d\,\|V\|_{H^{\lceil j/2\rceil}}^2p_j\bigl(t\|V\|_{X_d}^{1/2}\bigr)
\cosh\bigl(t\|V\|_{X_d}^{1/2}\bigr),
\end{equation}
where $p_j$ is a polynomial of order at most $\max(0,j-2)$.

For $d\ge 5$ and $k\ge 3$, we need the following lemma.

\begin{lemma}\label{lem:tildeak}
If $F\in C(\overline{\R_+})$, and $\phi(t)$ is an even Schwartz function, then for each $m\ge 1$ and $n\ge 0$ one can write
\begin{align*}
\int_0^\infty t^n\,F(t)\,\phi(t)\,dt&=\int_0^\infty t^{n+2m-1}F_{m,n}(t)\partial_t(t^{-1}\partial_t)^{m-1}\phi(t)\,dt\\
&=\int_0^\infty t^{n+2m}F_{m,n}(t)(t^{-1}\partial_t)^m\phi(t)\,dt,
\end{align*}
where $F_{m,n}\in C(\overline{\R_+})$. If $F\in C^j(\overline{\R_+})$ then so if $F_{m,n}$, and
$$
\|\partial_t^j F_{m,n}\|_{L^\infty}\le 
\frac{\Gamma(\frac{n+j+1}2)}{2^m\Gamma(m+\frac{n+j+1}2)}\,\|\partial_t^j F\|_{L^\infty}.
$$
\end{lemma}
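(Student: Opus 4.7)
The key preliminary observation is that an even Schwartz function $\phi$ can be written as $\phi(t)=\psi(t^2)$ for a Schwartz function $\psi$ on $\overline{\R_+}$, so that $(t^{-1}\partial_t)^k\phi(t)=2^k\psi^{(k)}(t^2)$ is again even and Schwartz, and $\partial_t(t^{-1}\partial_t)^{m-1}\phi = t\,(t^{-1}\partial_t)^m\phi$. The second identity shows that the two right-hand integrals in the lemma are equal term by term, so I need only prove one of them. My plan is to establish the second by induction on $m$.

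For the base case $m=1$, I would set
$$F_{1,n}(t):=-\int_0^1 u^n F(ut)\,du,$$
which is continuous on $\overline{\R_+}$ with $F_{1,n}(0)=-F(0)/(n+1)$. Differentiating under the integral immediately yields $\|\partial_t^j F_{1,n}\|_{L^\infty}\le(n+j+1)^{-1}\|\partial_t^j F\|_{L^\infty}$. The substitution $s=ut$ gives $t^{n+1}F_{1,n}(t)=-\int_0^t s^n F(s)\,ds$, and a Fubini exchange together with $\phi(\infty)=0$ then gives
$$\int_0^\infty t^{n+1}F_{1,n}(t)\phi'(t)\,dt = -\int_0^\infty s^n F(s)\int_s^\infty\phi'(t)\,dt\,ds = \int_0^\infty s^n F(s)\phi(s)\,ds,$$
which is the $m=1$ identity.

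For the induction step I would apply the $m=1$ case with $n$ replaced by $n+2m$, $F$ by $F_{m,n}$, and $\phi$ by the even Schwartz function $(t^{-1}\partial_t)^m\phi$. This gives the recursion $F_{m+1,n}(t)=-\int_0^1 u^{n+2m}F_{m,n}(ut)\,du$, which unwinds to the closed form
$$F_{m,n}(t) = (-1)^m\int_{[0,1]^m}\Bigl(\prod_{k=1}^m u_k^{n+2(m-k)}\Bigr)\, F(u_1u_2\cdots u_m\, t)\,du_1\cdots du_m.$$
Differentiating under the integral and using Fubini then reduces the norm estimate to a product of one-dimensional integrals,
$$\|\partial_t^j F_{m,n}\|_{L^\infty}\le \|\partial_t^j F\|_{L^\infty}\prod_{k=0}^{m-1}\frac{1}{n+j+2k+1},$$
which I recognize as the claimed $\Gamma$-ratio via the identity $\prod_{k=0}^{m-1}(x+k)=\Gamma(x+m)/\Gamma(x)$ applied with $x=(n+j+1)/2$, a step that contributes the factor $2^m$ in the denominator.

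There is no genuine obstacle here; the content is essentially a Fubini computation plus bookkeeping. The only subtlety worth isolating is the opening remark that evenness of $\phi$ is preserved by $(t^{-1}\partial_t)$, which is what legitimizes iterating the $m=1$ case in the induction.
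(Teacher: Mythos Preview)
Your proof is correct and follows exactly the same approach as the paper: the paper defines $F_{m,n}$ by the identical recursion $F_{m+1,n}(t)=-\int_0^1 s^{n+2m}F_{m,n}(st)\,ds$ with $F_{0,n}=F$, and states that the lemma follows by induction. You have simply supplied the details the paper omits (the Fubini verification of the base case, the reduction of the two displayed integrals to one via $\partial_t(t^{-1}\partial_t)^{m-1}\phi=t\,(t^{-1}\partial_t)^m\phi$, and the $\Gamma$-function bookkeeping for the norm bound).
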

\begin{proof}
The $F_{m,n}$ are obtained recursively by the formula
$$
F_{m+1,n}(t)=-\int_0^1 s^{n+2m}F_{m,n}(st)\,ds,
$$
where $F_{0,n}=F$, and the lemma follows by induction.
\end{proof}
We observe that
\begin{equation}\label{eqn:Fmnderiv}
\partial_t^j F_{m,n}(0)=
\frac{(-1)^m\Gamma(\frac{n+j+1}2)}{2^m\Gamma(m+\frac{n+j+1}2)}\partial_t^j F(0).
\end{equation}

For $d\ge 6$ even, define $\tilde a_{k,V}$ as follows, where $F=a_{k,V}$ in each instance,
\begin{equation}\label{eqn:tildeakeven}
\tilde a_{k,V}(t)=
\begin{cases}
F_{k-2,0}(t),& 3\le k <\frac d2,\\
F_{\frac{d-4}2,2k-d}(t),& k\ge\frac d2.
\end{cases}
\end{equation}
For $d\ge 5$ odd, with $F=a_{k,V}$ we define
\begin{equation}\label{eqn:tildeakodd}
\tilde a_{k,V}(t)=
\begin{cases}
F_{k-2,1}(t),& 3\le k <\frac d2,\\
F_{\frac{d-3}2,2k-d}(t),& k>\frac d2.
\end{cases}
\end{equation}
Theorem \ref{thm:main} then holds with
$$
\alpha_V(t)=a_{2,V}(t)+\sum_{k=3}^\infty (-1)^k t^{2(k-2)}\tilde a_{k,V}(t).
$$
The bounds \eqref{eqn:alphajbound} on $\partial_t^j \alpha_V(t)$ again follow from \eqref{eqn:alphabound}.

The fact that $\partial_t^{2j+1}\alpha_V(0)=0$ for $0\le j<m$ follows from \eqref{eqn:traceW2}, Theorem \ref{thm:traceWk}, and \eqref{eqn:Fmnderiv}.
The particular form of the even order Taylor coefficients of $\alpha_V(t)$ at $t=0$ is discussed in Remark \ref{rem:heat} below.

For the final conclusion of Theorem \ref{thm:main}, assume $V\in X_d$, and that the expansion \eqref{eqn:traceW'} holds.
By induction we may assume $V\in H^{m-1}(\R^d)$. It follows by the above steps that
$$
\sum_{k=3}^\infty (-1)^k t^{2(k-3)}\tilde a_{k,V}(t)\in C^{2(m-1)}(\overline{\R_+}),
$$
and hence that we can write
$$
\alpha_V(t)-a_{2,V}(t)=\sum_{j=0}^{m-1} \tilde b_j t^{2j}+\mathcal{O}(t^{2m}).
$$ 
We conclude that $a_{2,V}(t)$ admits an expansion of the form \eqref{eqn:traceW2'}, and from Theorem \ref{thm:traceW2} that $V\in H^m(\R^d)$.

\begin{remark}\label{rem:heat}
As observed in \cite{SaBZw}, one can use the Fourier transform of the Gaussian to express the relative trace of the heat kernel for $-\Delta+V$ in terms of the relative trace of the wave group. For $t>0$, the following holds
\begin{equation*}
\trace\Bigl(e^{t(\Delta-V)}-e^{t\Delta}\Bigr)=
\frac 1{4\sqrt{\pi t}}\trace\int_\R e^{-\frac{s^2}{4t}}\bigl(U_V(s)-U_0(s)\bigr)\,ds.
\end{equation*}
Although the Gaussian is not compactly supported, the estimates \eqref{eqn:aksumbound} and \eqref{eqn:trphiWk}, combined with a partition of unity to decompose the integral over $s$, show that the right side converges in the trace-class norm if $t>0$.

As noted in \cite{SaBZw}, if $V\in C_{\comp}^\infty(\R^d)$ this relates the coefficients $a_j(V)$ in \eqref{eqn:traceW} to the heat coefficients $c_j(V)$ for $-\Delta+V$. The relation also holds for finite order expansions under the assumptions in Theorem \ref{thm:main}, using the fact that for $t>0$,
$$
\frac 1{\sqrt{4\pi t}}\int_{-1}^1 e^{-\frac {s^2}{4t}}\,s^{2m}\,\epsilon(s)\,ds=t^m\tilde\epsilon(t),
$$
where $\lim_{t\rightarrow 0^+}\tilde\epsilon(t)=0$ if $\lim_{s\rightarrow 0}\epsilon(s)=0$.
We remark that our proof expresses the  $a_j(V)$ as multilinear integrals in $D^\alpha V$ with $|\alpha|\le j-2$, just as for the $c_j(V)$ in \cite{SmZw}.

In particular, Theorem \ref{thm:main} can be used to prove existence of finite order expansions of the relative trace for the heat kernel, but not vice versa. As mentioned above, the analogue of Theorem \ref{thm:main} for the heat kernel was established in \cite{SmZw}, using only the a priori assumption $V\in L^\infty_{\comp}$ in all dimensions, rather than $V\in \hatL_{\comp}$ if $d\ge 4$.
\end{remark}


\section{A formula for the trace of $W_{k,V}$.}\label{sec:traceform}

In this section we derive an integral formula for the trace of $\la\phi,W_{k,V}\ra$, which by the proof of Lemma \ref {lem:tracesum} equals the trace of
$$
4\int_{\R_+^{k+1}}\phi(s_1+\cdots+s_{k+1})\Cos(s_{k+1})V\Sin(s_k)V\cdots V \Sin(s_1)\,d^{k+1}s.
$$
Let $\rho$ be a radial, compactly supported function on $\R^d$ such that $\rho(y)\ge 0$ and $\int\rho(y)\,dy=1$.
By the proof of Lemma \ref {lem:tracesum}, we can write this as the limit as $\epsilon\rightarrow 0$ of the trace of the following operator,
$$
4\int_{\R_+^{k+1}}\phi(s_1+\cdots +s_{k+1})\Cos(s_{k+1})V\Sin_\epsilon(s_k)V\cdots V \Sin_\epsilon(s_1)\,d^{k+1}s,
$$
where $S_\epsilon(s_j)=\hat{\rho}(\epsilon D)S(s_j)$. For $\epsilon>0$ the integrand is trace class for each value of $s$.
Since the trace is cyclic, and the integral is invariant under $s_1\leftrightarrow s_{k+1}$, we can equate this to the trace of
$$
4\int_{\R_+^{k+1}}\phi(s_1+\cdots +s_{k+1})\Sin_\epsilon(s_k)V\cdots V \Sin_\epsilon(s_1+s_{k+1})V\,d^{k+1}s,
$$
which by a change of variables equals the trace of
$$
4\int_{\R_+^k}s_1\phi(s_1+\cdots +s_k)\Sin_\epsilon(s_k)V\cdots V \Sin_\epsilon(s_1)V\,d^ks.
$$
Introducing $\psi(s)=s\phi(s)$, the cyclic nature of the trace equates this with the trace of the following,
$$
\frac 4k\int_{\R_+^k}\psi(s_1+\cdots +s_k)\Sin_\epsilon(s_k)V\cdots V \Sin_\epsilon(s_1)V\,d^ks.
$$
By the Plancherel formula, for $\epsilon>0$ we can write its trace as
$$
\frac 4{k(2\pi)^{dk}}\int_{\R^{dk}}\int_{\R_+^k}\psi(s_1+\cdots +s_k)\prod_{j=1}^k
\hat\rho(\epsilon\xi_j)\frac{\sin(s_j|\xi_j|)}{|\xi_j|}\,\widehat{V}(\xi_j-\xi_{j-1})\,d^ks\,d\xi.
$$
We show here that this converges, as $\epsilon\rightarrow 0$, to its value at $\epsilon=0$, assuming $V\in L^\infty_{\comp}$ if $k=1,2$, and $V\in \hatL_{\comp}$ if $k\ge 3$. The case of $V\in L^\infty_{\comp}$ and $1\le d\le 3$ will be handled in a later section.

By the proof of Lemma \ref{lem:tracesum}, the following function
$$
\int_{\R_+^k}\psi(s_1+\cdots +s_k)\prod_{j=1}^k
\frac{\sin(s_j|\xi_j|)}{|\xi_j|}\,d^ks
$$
can be written as a sum of functions, each of which is dominated for some value of $j$
by $(1+|\xi_j|)^{-d+1}$. Hence, it suffices by dominated convergence to show that
$$
\sup_{\xi_1}\int \prod_{j=1}^k |\widehat{V}(\xi_j-\xi_{j-1})|\,d\xi_2\cdots d\xi_k\le C.
$$
For $k=1$ this holds since $V\in L^1$, and for $k=2$ it holds since $V\in L^2$. For $k\ge 3$ it holds assuming $\widehat V\in L^1\cap L^2$ by Young's inequality. We thus conclude the following.

\begin{lemma}\label{lem:ktraceform}
With $\psi(s)=s\phi(s)$, the trace of $\la \phi,W_{k,V}\ra$ is equal to
\begin{equation}\label{eqn:ktraceform}
\frac 4{k(2\pi)^{dk}}\int_{\R^{dk}}\int_{\R_+^k}\psi(s_1+\cdots +s_k)\prod_{j=1}^k\frac{\sin(s_j|\xi_j|)}{|\xi_j|}\,\widehat{V}(\xi_j-\xi_{j-1})\,d^ks\,d\xi,
\end{equation}
where we assume that $V\in \hatL_{\comp}(\R^d)$ if $k\ge 3$, and $V\in L^2_{\comp}(\R^d)$ if $k=1, 2$.
\end{lemma}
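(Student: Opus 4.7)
The plan is to start from the integral representation for $\trace\langle\phi,W_{k,V}\rangle$ derived in the proof of Lemma \ref{lem:tracesum}, namely the trace of $4\int_{\R_+^{k+1}}\phi(s_1+\cdots+s_{k+1})\Cos(s_{k+1})V\Sin(s_k)V\cdots V\Sin(s_1)\,d^{k+1}s$, and to manipulate it into the claimed form. Since this integrand is not pointwise trace class in $s$, the first step is to regularize by replacing each $\Sin(s_j)$ with $\Sin_\epsilon(s_j) = \hat\rho(\epsilon D)\Sin(s_j)$, which is a smoothing operator for $\epsilon > 0$. The integrand becomes trace class pointwise, Plancherel then applies directly, and the claimed identity will be recovered by sending $\epsilon \to 0$.

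Next I would use cyclicity of the trace together with the $s_1 \leftrightarrow s_{k+1}$ symmetry of the weight $\phi(s_1+\cdots+s_{k+1})$ to convert the $(k+1)$-fold integral with the $\Cos$ factor into a $k$-fold integral featuring $\psi(s) = s\phi(s)$. Cyclicity allows one to place $\Sin_\epsilon(s_1)$ next to $\Cos(s_{k+1})$; as these commute (both functions of $|D|$), symmetrizing in $s_1 \leftrightarrow s_{k+1}$ collapses $\Sin_\epsilon(s_1)\Cos(s_{k+1})$ to $\tfrac{1}{2}\Sin_\epsilon(s_1+s_{k+1})$ via the product-to-sum identity, the odd piece $\Sin_\epsilon(s_1 - s_{k+1})$ dropping out by antisymmetry. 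The change of variables $s_1 \mapsto s_1 + s_{k+1}$ removes $s_{k+1}$ and contributes a Jacobian factor of $s_1$. A final cyclic averaging over the $k$ positions of the $V$ factors replaces $s_1$ by $\tfrac{1}{k}(s_1+\cdots+s_k)$, producing the $\tfrac{4}{k}\psi(s_1+\cdots+s_k)$ prefactor of the claimed formula.

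Plancherel on a sufficiently large cube $K$ (large enough to contain the operator's range by finite speed of propagation) then expresses the trace as a $\xi$-integral. Since the symbol of $\Sin_\epsilon(s_j)$ is $\hat\rho(\epsilon\xi_j)\sin(s_j|\xi_j|)/|\xi_j|$ and the matrix entries of multiplication by $V$ in the Fourier basis of $K$ are proportional to $\widehat V(\xi_j - \xi_{j-1})$, one obtains the stated integrand times the factor $\prod_j \hat\rho(\epsilon\xi_j)$. To remove the regularization, I would carry out the $s$-integration first to obtain a function $F_k(\xi_1,\ldots,\xi_k)$, then invoke dominated convergence using $\hat\rho(\epsilon\xi_j)\to\hat\rho(0)=1$.

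The main obstacle is producing an $\epsilon$-independent dominator in $\xi$, since $F_k$ does not decay in every $\xi_j$ simultaneously. Here I would reuse the integration by parts scheme from the proof of Lemma \ref{lem:tracesum}, writing $\sin(s_j|\xi_j|) = -|\xi_j|^{-2}\partial_{s_j}^2\sin(s_j|\xi_j|)$ iteratively in one variable to decompose $F_k$ as a sum of terms each gaining $(1+|\xi_j|)^{-(d+1)}$ decay in some index $j$, with boundary terms handled in the same way. Each resulting contribution is then bounded by $\sup_{\xi_j}\int\prod_{i\ne j}|\widehat V(\xi_i - \xi_{i-1})|\,d\xi_i$ times a constant, so absolute integrability reduces to finiteness of this supremum. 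For $k=1$ the product is empty and only $|\widehat V(0)|\le\|V\|_{L^1}$ appears, finite since $V\in L^2_{\comp}$ has compact support; for $k=2$ the supremum is $\|\widehat V\|_{L^2}^2$, again finite by Plancherel from $V \in L^2_{\comp}$; and for $k\ge 3$, after changing variables to the differences $\eta_i = \xi_i - \xi_{i-1}$, the supremum reduces to a $(k-1)$-fold convolution of $|\widehat V|$ evaluated at a single point, finite by iterated Young's inequality precisely when $\widehat V \in L^1$, i.e.\ $V \in \hatL_{\comp}$. Dominated convergence then identifies the $\epsilon \to 0$ limit with the stated formula.
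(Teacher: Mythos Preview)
Your proposal is correct and follows essentially the same route as the paper's proof: regularize, use cyclicity together with the $s_1\leftrightarrow s_{k+1}$ symmetry to collapse $\Cos(s_{k+1})\Sin_\epsilon(s_1)$ into $\Sin_\epsilon(s_1+s_{k+1})$, change variables and cyclically average to produce the factor $\psi/k$, apply Plancherel, and justify the $\epsilon\to 0$ limit via the integration-by-parts decomposition from Lemma~\ref{lem:tracesum} combined with the $\sup$-convolution bound on the $\widehat V$ factors. One minor remark: the Plancherel step should be carried out on $\R^d$ rather than on a cube $K$, since the target formula is an integral over $\R^{dk}$ and the Fourier basis of a torus would yield a lattice sum instead (for $\epsilon>0$ the regularized operator has smooth compactly supported Schwartz kernel, so the trace is directly computable by Fourier inversion on $\R^d$).
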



\section{Proof of Theorem \ref{thm:traceW1}}
For $k=1$, we use \eqref{eqn:ktraceform} to write
$$
\trace\la \phi,W_{1,V}\ra=
\frac 4{(2\pi)^d}\int_{\R^d}
\int_0^\infty \psi(s)|\xi|^{-1}\sin(s|\xi|)\widehat{V}(0)\,ds\,d\xi.
$$
With $\psi(s)=s\phi(s)$ considered as a smooth, odd function on $\R$, this equals
\begin{multline*}
i\ts\Bigl(\int V(x)\,dx\Bigr)\,
\frac{2}{(2\pi)^d}\int_{\R^d} \hat\psi(|\xi|)|\xi|^{-1}\,d\xi\\
=
\Bigl(\int V(x)\,dx\Bigr)\,
\frac{2\omega_d}{(2\pi)^d}\int_0^\infty i\tau^{d-2}\hat\psi(\tau)\,d\tau,
\end{multline*}
where $\omega_d=2\pi^{\frac d2}/\Gamma(\frac d2)$ is the area of the $d-1$ sphere.

If $d=1$, then
$$
\int_0^\infty i\tau^{-1}\hat\psi(\tau)\,d\tau=\pi\int_0^\infty \psi(t)\,dt=
\pi\int_0^\infty t\ts\phi(t)\,dt,
$$
and if $d\ge 3$ is odd, then
$$
\int_0^\infty i\tau^{d-2}\hat\psi(\tau)\,d\tau=
i\pi \bigl(D^{d-2}\psi\bigr)(0)=
(d-2)\pi\bigl(D^{d-3}\phi\bigr)(0).
$$
The Taylor expansion of $\phi$ about $t=0$ shows that, for $\phi$ even in $t$,
$$
(d-2)\pi D^{d-3}\phi(0)=(-2)^{\frac{d-1}2}\pi^{\frac 12}\Gamma(\tfrac d2)
\int_0^\infty \partial_t(t^{-1}\partial_t)^{\frac{d-3}2}\phi(t)\,dt.
$$
For $d\ge 3$ odd this yields, assuming $\phi(t)$ is even in $t$,
$$
\nu_d(\phi)=2(-2\pi)^{\frac{1-d}2} \int_0^\infty \partial_t(t^{-1}\partial_t)^{\frac{d-3}2}\phi(t)\,dt.
$$

If $d=2$, then
$$
\int_0^\infty i\hat\psi(\tau)\,d\tau=
-\int_0^\infty \partial_\tau\hat\phi(\tau)\,d\tau=
2\int_0^\infty\phi(t)\,dt,
$$
and if $d\ge 4$ is even, then
$$
\int_0^\infty i\tau^{d-2}\hat\psi(\tau)\,d\tau=
(d-2)\int_0^\infty \tau^{d-3}\hat\phi(\tau)\,d\tau=
(d-2)\pi\bigl(|D|^{d-3}\phi\bigr)(0).
$$
Note that, for $\phi(t)$ even in $t$,
$$
\int_{-\infty}^\infty t^{-1}\partial_t\phi(t)\,dt=-\pi (|D|\phi)(0).
$$
The following identity, which is valid for $k\ge 0$,
$$
t\partial_t|D|^k\delta=-(k+1)|D|^k\delta,
$$
shows that when $\phi(t)$ is even in $t$, and $k\ge 1$,
$$
\int_0^\infty (t^{-1}\partial_t)^k\phi(t)\,dt=\frac{(-1)^k\pi}{2^k(k-1)!}\bigl(|D|^{2k-1}\phi\bigr)(0).
$$
Hence when $d\ge 4$ is even, and $\phi(t)$ is even in $t$,
$$
\nu_d(\phi)=4(-1)^{\frac{d-2}2}(2\pi)^{-\frac d2} \int_0^\infty (t^{-1}\partial_t)^{\frac{d-2}2}\phi(t)\,dt.
$$
From the above formula, this holds when $d=2$ as well.


\section{Evaluation of the trace for $k\ge 2$ and $2k\ge d$.}
Let
$$
M_k(\xi)=\int_{\Delta^{k-1}}\;\prod_{j=1}^k\biggl(\,\frac{\sin(s_j|\xi_j|)}{|\xi_j|}\biggr)\,d\bfs,
$$
where $\Delta^{k-1}$ is the $k-1$-simplex,
$$
\Delta^{k-1}=\{\bfs\in\R^{k}:s_1+\cdots+s_k=1\,,\;s_j\ge 0 \;\; \forall j\},
$$
and the measure $d\bfs$ is the pullback of Lebesgue measure by the projection onto $(s_1,\ldots,s_{k-1})$.
Then by Lemma \ref{lem:ktraceform}, we can express $\trace\la \phi,W_{k,V}\ra$ as
\begin{equation*}
\frac{4}{k(2\pi)^{dk}}\int_{\R^{dk}}\int_0^\infty t^{2k}\ts\phi(t) M_k(t\xi)
\biggl(\prod_{j=1}^k\widehat{V}(\xi_j-\xi_{j-1})\biggr)\,dt\,d\xi.
\end{equation*}

\begin{lemma}\label{lem:Vhatprod}
Suppose $V_j\in \hatL\cap H^m(\R^d)$, and let $V_j^{(m_j)}=\la D\ra^{m_j} V_j$.
Assume that
$\{m_j\}\in\N^k$ satisfy $m_j\le m$, and $\sum_j m_j\le 2m$. Then
\begin{multline*}
\sup_{\xi_1}\int \prod_{j=1}^k\,\Bigl|\widehat{V_j^{(m_j)}}(\xi_j-\xi_{j-1})\Bigr|\,d\xi_2\cdots d\xi_k\\
\le (2\pi)^{d(k-1)}
\Bigl(\sup_j \|V_j\|_{\hatL}\Bigr)^{k-2}\Bigl(\sup_j \|V_j\|_{H^m}\Bigr)^2.
\end{multline*}
If $k=2$, the bound holds for $V_j\in H^m(\R^d)$.
\end{lemma}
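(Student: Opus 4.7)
The plan is to make a cyclic change of variables, rewrite the integrand as the pairing of $\widehat{V_1^{(m_1)}}$ with an iterated convolution, and redistribute the derivative weights using the zero-sum relation before applying Young's inequality. First I would set $\eta_j = \xi_j - \xi_{j-1}$ (with $\xi_0 = \xi_k$), so $\sum_{j=1}^k \eta_j = 0$. The map $(\xi_2, \ldots, \xi_k) \mapsto (\eta_2, \ldots, \eta_k)$ has unit Jacobian and $\eta_1 = -(\eta_2 + \cdots + \eta_k)$, so the integrand is independent of $\xi_1$ and the supremum drops out. Writing $G_j(\eta) = \langle \eta \rangle^{m_j}|\widehat V_j(\eta)|$, the quantity to bound is
\[I = \int G_1\Bigl(-\sum_{j=2}^k \eta_j\Bigr)\prod_{j=2}^k G_j(\eta_j)\,d\eta_2\cdots d\eta_k = \int G_1(-\sigma)\,(G_2 \ast \cdots \ast G_k)(\sigma)\,d\sigma.\]

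For $k=2$ this reduces to $\int \langle \eta\rangle^{m_1+m_2}|\widehat V_1(-\eta)||\widehat V_2(\eta)|\,d\eta$; since $m_1 + m_2 \le 2m$ with each $m_j \le m$, I would split $\langle \eta\rangle^{m_1+m_2} = \langle \eta\rangle^a \langle \eta\rangle^b$ with $a+b = m_1+m_2$ and $a, b \le m$, then apply Cauchy--Schwarz and Plancherel to get $I \le (2\pi)^d \|V_1\|_{H^m}\|V_2\|_{H^m}$. For $k \ge 3$ the essential ingredient is the pointwise estimate
\[\prod_{j=1}^k \langle \eta_j\rangle^{m_j} \;\le\; C_{m,k}\sum_{i_1 \ne i_2}\langle \eta_{i_1}\rangle^m \langle \eta_{i_2}\rangle^m\]
valid on $\{\sum \eta_j = 0\}$, which follows from $\prod_j \langle \eta_j\rangle^{m_j} \le (\max_l \langle \eta_l\rangle)^{\sum m_j} \le (\max_l \langle \eta_l\rangle)^{2m}$ together with the fact that on the zero-sum surface the argmax is comparable to the next largest (since $|\eta_{i_\ast}| \le (k-1)\max_{l \ne i_\ast}|\eta_l|$), which lets one trade half of the $2m$ exponent onto a second factor.

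After the pointwise redistribution, for each ordered pair $(i_1, i_2)$ I would bound the integral by Cauchy--Schwarz on the $\sigma$-pairing when $1 \in \{i_1, i_2\}$ (or by $L^1$--$L^\infty$ H\"older otherwise) combined with iterated Young's inequality for the convolution, assigning $L^2$ to the two weighted factors and $L^1$ to all others. Translating using $\int |\widehat V_j|\,d\eta = (2\pi)^d \|V_j\|_{\hatL}$ and $\|\langle \cdot\rangle^m \widehat V_j\|_{L^2} = (2\pi)^{d/2}\|V_j\|_{H^m}$ gives exactly the Fourier normalization $(2\pi)^{d(k-1)}$, with the combinatorial constant $C_{m,k}$ absorbed after summing over the $\le k(k-1)$ pairs. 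The main obstacle is precisely the weight redistribution: in general $m_j$ is positive at every index, so one cannot place an $L^1$ bound on a factor that still carries derivatives, and it is only after exploiting $\sum \eta_j = 0$ to concentrate all derivatives on two factors that a clean Young-type bound applies.
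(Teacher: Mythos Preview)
Your change of variables and convolution reformulation are correct, and the $k=2$ case is fine. For $k\ge 3$ your pointwise redistribution
\[
\prod_{j=1}^k \langle\eta_j\rangle^{m_j}\le C_{m,k}\sum_{i_1\ne i_2}\langle\eta_{i_1}\rangle^m\langle\eta_{i_2}\rangle^m\quad\text{on }\Bigl\{\textstyle\sum_j\eta_j=0\Bigr\}
\]
is also valid (your argmax argument gives $C_{m,k}=(k-1)^m$), and the subsequent Young/Cauchy--Schwarz step goes through. The gap is in your final sentence: there is nothing to ``absorb'' the combinatorial constant into. The lemma as stated carries the precise constant $(2\pi)^{d(k-1)}$, while your argument produces an additional factor of order $k(k-1)\cdot(k-1)^m$ from summing over index pairs. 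So you have proved a weaker inequality than the one claimed, though one that would still suffice for the downstream applications in the paper (where only factorial-in-$k$ decay matters).

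The paper sidesteps this loss by a convexity argument rather than a pointwise one. The map
\[
(m_1,\ldots,m_k)\;\longmapsto\;\log\int\prod_{j=1}^k\langle\eta_j\rangle^{m_j}\,\bigl|\widehat{V_j}(\eta_j)\bigr|\,d\eta_2\cdots d\eta_k
\]
is convex (immediate from H\"older), and the constraint polytope $\{0\le m_j\le m,\ \sum_j m_j\le 2m\}$ has only vertices with at most two coordinates equal to $m$ and the rest equal to $0$. Thus it suffices to check the bound at such a vertex, where two factors carry the full weight $\langle\cdot\rangle^m$ and lie in $L^2$, while the remaining $k-2$ are unweighted and lie in $L^1$; a single application of Young then gives exactly $(2\pi)^{d(k-1)}(\sup_j\|V_j\|_{\hatL})^{k-2}(\sup_j\|V_j\|_{H^m})^2$. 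Your zero-sum trick and the paper's convexity serve the same purpose---concentrating all the derivative weight on two indices---but convexity does it losslessly, without paying for the number of index pairs.
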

\begin{proof}
Note that $|\widehat{V_j^{(m_j)}}(\xi)|=\la\xi\ra^{m_j}|\widehat{V_j}(\xi)|$.
By convexity, it suffices to consider the extremal cases where $m_j=m$ for 2 values of $j$ and $m_j=0$ otherwise,
thus $\widehat{V_j^{(m_j)}}\in L^2$ for two values of $j$, and $\widehat{V_j^{(m_j)}}\in L^1$ for all other values of $j$. The result then follows by Young's inequality.
\end{proof}

\begin{lemma}\label{lem:multform}
$$
M_k(\xi)=\,
\sum_{j=1}^k\biggl(\,\prod_{i\ne j}\frac{1}{|\xi_i|^2-|\xi_j|^2}\biggr)\frac{\sin(|\xi_j|)}{|\xi_j|}\,.
$$
\end{lemma}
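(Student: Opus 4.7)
The plan is to identify $M_k(\xi)$ as the value at $t = 1$ of the $k$-fold convolution
$$
(f_1 * f_2 * \cdots * f_k)(t), \qquad f_j(s) = \frac{\sin(s|\xi_j|)}{|\xi_j|}\, \one_{s \ge 0},
$$
and then invert a Laplace transform. The identification is immediate: unfolding the convolution, setting $s_1 = 1 - s_2 - \cdots - s_k$, and matching with the measure $d\bfs$ on $\Delta^{k-1}$ pulled back by projection to the last $k-1$ coordinates reproduces exactly the integral defining $M_k$.

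Taking Laplace transforms, $(\L f_j)(p) = (p^2 + |\xi_j|^2)^{-1}$ on $\{\Re p > 0\}$, so $\L(f_1 * \cdots * f_k)(p) = \prod_j (p^2 + |\xi_j|^2)^{-1}$. Under the assumption that the $|\xi_j|^2$ are pairwise distinct, partial fractions in the variable $q = p^2$ yields
$$
\prod_{j=1}^k \frac{1}{p^2 + |\xi_j|^2} = \sum_{j=1}^k \biggl(\prod_{i \neq j} \frac{1}{|\xi_i|^2 - |\xi_j|^2}\biggr) \frac{1}{p^2 + |\xi_j|^2}.
$$
Inverting termwise via $\L^{-1}\bigl((p^2 + \tau^2)^{-1}\bigr)(t) = \tau^{-1} \sin(t\tau)$ and evaluating at $t = 1$ gives the claimed identity on the open, dense subset where all the $|\xi_j|^2$ are distinct.

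To extend to all $\xi$: the left side $M_k(\xi)$ is manifestly a smooth, in fact entire, function of $(|\xi_1|^2, \ldots, |\xi_k|^2)$; and the right side is, up to a sign, the divided difference of the entire function $z \mapsto \sin(\sqrt{z})/\sqrt{z}$ at the nodes $|\xi_j|^2$, which extends smoothly across the diagonals $|\xi_i|^2 = |\xi_j|^2$ with the apparent poles being removable. Continuity then promotes the identity from a dense open set to all of $\R^{dk}$. I expect no substantive obstacle here: the sole point requiring a remark is the removability of the apparent singularities on the right, which is a standard property of divided differences of entire functions.
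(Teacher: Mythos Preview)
Your proof is correct and takes a genuinely different route from the paper's. The paper proceeds by direct induction on $k$: it first verifies the elementary identity
\[
\int_0^s\sin\bigl((s-r)A\bigr)\sin(rB)\,dr=\frac{B\sin(sA)}{B^2-A^2}+\frac{A\sin(sB)}{A^2-B^2},
\]
and then iterates, at each step absorbing the new factor via a partial-fractions identity
\[
\sum_{j=1}^{k-1}\frac{1}{|\xi_j|^2-|\xi_k|^2}\prod_{\substack{i\ne j\\i<k}}\frac{1}{|\xi_i|^2-|\xi_j|^2}=\prod_{i=1}^{k-1}\frac{1}{|\xi_i|^2-|\xi_k|^2},
\]
which it justifies by a separate polynomial-interpolation argument. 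Your approach packages the whole computation at once: recognize $M_k$ as the $k$-fold convolution evaluated at $t=1$, pass to the Laplace side where the product $\prod_j(p^2+|\xi_j|^2)^{-1}$ appears, do a single partial-fractions decomposition in $q=p^2$, and invert. This is cleaner and avoids the auxiliary polynomial identity; it also makes transparent the connection to divided differences, which the paper only brings in at the next lemma. The paper's argument, on the other hand, is entirely elementary and requires no transform machinery. Both handle the case of coinciding $|\xi_j|^2$ implicitly by analyticity; your remark about removable singularities via the divided-difference interpretation is the right justification.
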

\begin{proof}
We start with the equality
$$
\int_0^s\sin{((s-r)A)}\,\sin{(rB)}\,dr=\frac{B\sin(sA)}{B^2-A^2}+\frac{A\sin(sB)}{A^2-B^2}\,.
$$
Let $r_j=s_1+\cdots +s_j$ for $1\le j\le k-1$, and $r_0=0$, $r_k=1$. Repeated application of the above equality then yields
\begin{multline*}
\int_0^1\int_0^{r_{k-1}}\cdots\int_0^{r_2}\biggl(\,\prod_{j=1}^k\sin\bigl((r_j-r_{j-1})|\xi_j|\bigr)\biggr)\,dr_1\cdots dr_{k-1}\\
=\sum_{j=1}^k\biggl(\,\prod_{i\ne j}\frac{|\xi_i|}{|\xi_i|^2-|\xi_j|^2}\biggr)\sin(|\xi_j|),
\end{multline*}
and we obtain the result.
In evaluating the integral we use that
$$
\sum_{j=1}^{k-1}\frac{1}{|\xi_j|^2-|\xi_k|^2}\biggl(\,\prod_{i\ne j,i<k}\frac{1}{|\xi_i|^2-|\xi_j|^2}\biggr)=\prod_{i=1}^{k-1}\frac{1}{|\xi_i|^2-|\xi_k|^2}.
$$
This is equivalent to 
$$
\sum_{j=1}^{k-1}\biggl(\,\prod_{i\ne j,i<k}\frac{|\xi_i|^2-|\xi_k|^2}{|\xi_i|^2-|\xi_j|^2}\biggr)=1,
$$
which can be seen by observing that replacing $|\xi_k|^2$ by $z$ gives a polynomial of order $k-2$ in $z$ that has value 1 at each of $z=|\xi_n|^2$, $n=1,\ldots,k-1$.
\end{proof}

We introduce a family of functions $G_\nu(w)$ for $\nu>-1$, defined initially for $w\notin(-\infty,0]$ by the rule
$$
G_\nu(z^2)=\frac{\sqrt{\pi}}{(2z)^\nu}\,J_\nu(z),\quad \Re(z)>0,
$$
From the power series expansion of $J_\nu(z)$, see \cite[3.1(8)]{W}, we have
\begin{equation}\label{eqn:Gseries}
G_\nu(w)=4^{-\nu}\sqrt{\pi}\sum_{m=0}^\infty \frac{(-1)^m(\frac 14 w)^m}{m!\Gamma(\nu+m+1)}.
\end{equation}
We remark that the function $G_\nu$ is a generalized hypergeometric function,
$$
G_\nu(w)=\frac{4^{-\nu}\sqrt{\pi}}{\Gamma(\nu+1)}\,{_0F_1}\Bigl(\nu+1,-\frac{w}4\Bigr),
$$
though we will not use this fact here. See \cite[p. 362, 9.1.69]{AS}.

From a standard integral formula for $J_\nu(z)$, see \cite[6.2(8)]{W}, we see that $G_\nu(z)$ is an entire function given by the following integral, where $c>0$,
$$
G_\nu(z)=\frac{4^{-\nu}\sqrt{\pi}}{2\pi i}\int_{c-\infty i}^{c+\infty i}e^{s-\frac z{4s}}\frac{ds}{s^{\nu+1}}\;.
$$
From this we easily derive the following relations,
\begin{equation}\label{eqn:Gdiff}
\frac{d}{dz} G_\nu(z)=-G_{\nu+1}(z),
\end{equation}
and
\begin{equation}\label{eqn:Gint}
\int_0^\infty G_{\nu+\frac 12}(r^2+z)\,dr=\frac{\sqrt{\pi}}{2}\, G_{\nu}(z).
\end{equation}
We note the following special cases:
\begin{equation}\label{eqn:Gcases}
G_\nu(z^2)=\begin{cases} 
2\cos(z),& \nu=-\frac 12,\\
\sqrt{\pi} \ts J_0(z),& \nu=0,\\
z^{-1}\sin z,& \nu=\frac 12.
\end{cases}
\end{equation}

Another formula for $G_\nu(z^2)$, for $\nu>-1/2$, follows from \cite[3.3(4)]{W},
$$
G_\nu(z^2)=\frac{1}{4^\nu \ts \Gamma(\nu+\frac 12)}\int_{-1}^1 (1-s^2)^{\nu-\hf}\,e^{izs}\,ds.
$$
This leads to the following bound for $j\ge 0$ and $x\in\R$, which is also directly verified for $\nu=-1/2$ using \eqref{eqn:Gcases}.
\begin{equation}\label{eqn:Gbound}
\begin{split}
\sup_{x\ge 0}\;\Bigl|\frac{d^j}{dx^j}G_\nu(x^2)\Bigr|&= \frac{2}{4^\nu \ts \Gamma(\nu+\frac 12)}\int_0^1 s^j(1-s^2)^{\nu-\hf}\,ds\\
&=\frac{\Gamma(\frac{j+1}2)}{4^\nu\ts\Gamma(\frac j2+\nu+1)}.
\end{split}
\end{equation}

\begin{lemma}\label{lem:multform2}
For any smooth function $f(x_1,\ldots,x_k):\R^k\rightarrow\C$, one has
$$
\sum_{j=1}^k f(x_j)\biggl(\,\prod_{i\ne j}\frac{1}{x_i-x_j}\biggr)
=
(-1)^{k-1}\int_{\Delta^{k-1}}f^{(k-1)}(s_1x_1+\cdots+s_k x_k)\,d\bfs.
$$
\end{lemma}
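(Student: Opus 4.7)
The identity is (up to sign) the classical Hermite-Genocchi formula. Since $\prod_{i\ne j}(x_i-x_j)=(-1)^{k-1}\prod_{i\ne j}(x_j-x_i)$, the left-hand side equals $(-1)^{k-1}$ times the Newton divided difference of $f$ at $x_1,\ldots,x_k$. Both sides are smooth symmetric functions of $(x_1,\ldots,x_k)$ and extend continuously to coincident arguments, so by continuity it suffices to prove the identity when the $x_j$ are distinct.

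The plan is to induct on $k$. For $k=1$ both sides equal $f(x_1)$. For the inductive step, set
\[
I_k(x_1,\ldots,x_k)=\int_{\Delta^{k-1}} f^{(k-1)}(s_1x_1+\cdots+s_kx_k)\,d\bfs,
\]
parameterize $\Delta^{k-1}$ by $(s_1,\ldots,s_{k-1})$ with $s_k=1-\sum_{j\le k-1}s_j$, and integrate out $s_{k-1}$ first, holding $s_1,\ldots,s_{k-2}$ fixed. With $A=\sum_{j\le k-2}s_jx_j$ and $B=1-\sum_{j\le k-2}s_j$, the argument of $f^{(k-1)}$ becomes $A+Bx_k+s_{k-1}(x_{k-1}-x_k)$, which is linear in $s_{k-1}$ with slope $x_{k-1}-x_k\ne 0$. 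The fundamental theorem of calculus then yields
\[
\int_0^B f^{(k-1)}\bigl(A+Bx_k+s_{k-1}(x_{k-1}-x_k)\bigr)\,ds_{k-1}=\frac{f^{(k-2)}(A+Bx_{k-1})-f^{(k-2)}(A+Bx_k)}{x_{k-1}-x_k},
\]
and recognizing the remaining integrals over $(s_1,\ldots,s_{k-2})\in\Delta^{k-2}$ gives the recursion
\[
I_k(x_1,\ldots,x_k)=\frac{I_{k-1}(x_1,\ldots,x_{k-2},x_{k-1})-I_{k-1}(x_1,\ldots,x_{k-2},x_k)}{x_{k-1}-x_k}.
\]

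Applying the induction hypothesis to each $I_{k-1}$ and collecting coefficients of $f(x_j)$, the terms $j=k-1$ and $j=k$ come from only one of the two numerator sums and match the target directly after absorbing the prefactor $(x_{k-1}-x_k)^{-1}$. For each $j\le k-2$, both sums contribute, and the partial-fraction identity
\[
\frac{1}{x_{k-1}-x_j}-\frac{1}{x_k-x_j}=\frac{x_k-x_{k-1}}{(x_{k-1}-x_j)(x_k-x_j)}
\]
cancels the prefactor $(x_{k-1}-x_k)^{-1}$ and supplies precisely the missing $i=k-1,k$ factors in $\prod_{i\ne j,\,i\le k}(x_i-x_j)^{-1}$. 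The inductive sign $(-1)^{k-2}$ combines with the minus sign from the difference of numerators to give the desired $(-1)^{k-1}$. The only real work is this partial-fraction bookkeeping; there is no analytic obstacle.
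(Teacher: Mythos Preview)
Your argument is correct and follows essentially the same route as the paper: both proceed by induction on $k$, integrate out one barycentric coordinate using the fundamental theorem of calculus to obtain a two-term recursion for the simplex integral, and then verify the closed-form sum via the same partial-fraction identity. Your identification of the formula as the Hermite--Genocchi representation of divided differences is a helpful remark, but the proof itself mirrors the paper's.
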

\begin{proof}
For $k=2$, this reduces to the equality
$$
\int_0^1 f'(s_1x_1+(1-s_1)x_2)\,ds_1=\frac{f(x_1)-f(x_2)}{x_1-x_2}.
$$
We proceed by induction, and write
\begin{align*}
&\int_{\Delta^k}f^{(k)}(s_1x_1+\cdots+s_kx_k+s_{k+1}x_{k+1})\,d\bfs\\
&=\frac{1}{x_k-x_{k+1}}\int_{\Delta^{k-1}}\Bigl(f^{(k-1)}(s_1x_1+\cdots s_k x_k)-
f^{(k-1)}(s_1x_1+\cdots s_k x_{k+1})\Bigr)\,d\bfs
\end{align*}
By the induction assumption, the latter equals
\begin{multline*}
\frac{(-1)^{k-1}}{x_k-x_{k+1}}\sum_{j=1}^{k-1}f(x_j)\biggl(\,\prod_{\substack{i\ne j\\i<k}}\frac{1}{x_i-x_j}\biggr)\biggl(\frac{1}{x_k-x_j}-\frac{1}{x_{k+1}-x_j}\biggr)\\
+\frac{(-1)^{k-1}}{x_k-x_{k+1}}\biggl(f(x_k)\prod_{i<k}\frac{1}{x_i-x_k}-
f(x_{k+1})\prod_{i<k}\frac{1}{x_i-x_{k+1}}\biggr)\\
=(-1)^k\sum_{j=1}^{k+1}f(x_j)\biggl(\,\prod_{i\ne j}\frac{1}{x_i-x_j}\biggr).
\end{multline*}
\end{proof}

For $\bfs\in\Delta^{k-1}$ and $\xi\in \R^{dk}$, we will use the following notation,
$$
\bfs\cdot\xi=\sum_{j=1}^k s_j\xi_j,\qquad |\xi|_{\bfs}^2=\sum_{j=1}^k s_j |\xi_j|^2.
$$
We can write
\begin{align}
|\xi|_{\bfs}^2
&=|\bfs\cdot\xi|^2
+|\xi|_{\bfs}^2 - |\bfs\cdot\xi|^2\notag
\\
&\equiv|\bfs\cdot\xi|^2+Q_{k,\bfs}(\xi).\label{eqn:etaform}
\end{align}
Note that by strict convexity of $|\cdot|^2$, for $\bfs$ in the interior of $\Delta^{k-1}$ we have $Q_{k,\bfs}(\xi)>0$ unless $\xi_i=\xi_j$ for all $i,j$.
Furthermore,
\begin{equation}\label{eqn:Qtrans}
Q_{k,\bfs}(\xi_1,\xi_2,\ldots,\xi_k)=Q_{k,\bfs}(0,\xi_2-\xi_1,\ldots,\xi_k-\xi_1),
\end{equation}
and
\begin{equation}\label{eqn:Q2sform}
Q_{2,\bfs}(\xi)=s_1(1-s_1)|\xi_2-\xi_1|^2.
\end{equation}

Recalling \eqref{eqn:Gcases}, by Lemmas \ref{lem:multform} and \ref{lem:multform2} we have
\begin{equation*}
M_k(t\xi)=\int_{\Delta^{k-1}}G_{k-\hf}\bigl(t^2|\xi|_\bfs^2\bigr)\,d\bfs.
\end{equation*}
This leads to the following expression for $\trace\la \phi,W_{k,V}\ra$, for $\phi$ even,
\begin{equation*}
\frac{4}{k(2\pi)^{dk}}\int_{\Delta^{k-1}}\int_{\R^{dk}}\int_{0}^\infty t^{2k}\ts\phi(t)\,
G_{k-\hf}\bigl(t^2|\xi|_\bfs^2\bigr) 
\biggl(\prod_{j=1}^k\widehat{V}(\xi_j-\xi_{j-1})\biggr)\,dt\,d\xi\,d\bfs.
\end{equation*}

For $k\ge 1$ and $t\ge 0$, we can write $G_{k-\hf}\bigl(t^2|\xi|^2_{\bfs}\bigr)$ as
\begin{multline*}
\Bigl(\frac {-1}{2z}\frac{d}{dz}\Bigr)^{k-1}\Bigl(\frac{\sin z}{z}\Bigr)\Bigr|_{z=t|\xi|_\bfs}
\\
=\sum_{j=k}^{2k-1}\frac{1}{\bigl(t|\xi|_{\bfs}\bigr)^j}\Bigl(a_{k,j}\cos\bigl(t|\xi|_{\bfs}\bigr)+b_{k,j}\sin\bigl(t|\xi|_{\bfs}\bigr)\Bigr),
\end{multline*}
which shows that we can write 
$$
\int_{0}^\infty t^{2k}\ts\phi(t)\,
G_{k-\hf}\bigl(t^2|\xi|_\bfs^2\bigr)\,dt=\Phi(|\xi|_{\bfs}\bigr),
$$
for an even, Schwartz function $\Phi$.

We make the volume preserving change of variables:
\begin{equation*}
\eta_1=\bfs\cdot\xi,\quad \eta_j=\xi_j-\xi_1\;\;\text{for}\;\; 2\le j\le k.
\end{equation*}
Let $Q_{k,\bfs}(\eta')=Q_{k,\bfs}(0,\eta_2,\ldots,\eta_k)$. Then by \eqref{eqn:Qtrans}, the integrand becomes
\begin{multline*}
t^{2k}\phi(t)G_{k-\hf}\Bigl(t^2|\eta_1|^2+t^2Q_{k,\bfs}(\eta')\Bigr)
\widehat{V}(\eta_2)\widehat{V}(\eta_3-\eta_2)\times\\
\cdots\widehat{V}(\eta_k-\eta_{k-1})\widehat{V}(-\eta_k).
\end{multline*}
This is radial in $\eta_1$. Consequently, we can express $\trace\la \phi,W_{k,V}\ra$ as
\begin{multline}\label{eqn:traceform}
\frac{8\pi^{\frac d2}}{k(2\pi)^{kd}\Gamma(\frac d2)}\int_{\Delta^{k-1}}\int_{\R^{d(k-1)}}\int_0^\infty\int_0^\infty t^{2k}\ts\phi(t)\times\\
G_{k-\frac 12}\bigl(t^2r^2+t^2Q_{k,\bfs}(\eta')\bigr)\widehat{V}(\eta_2)
\cdots\widehat{V}(-\eta_k)\,dt\,r^{d-1}dr\,d\eta'\,d\bfs.
\end{multline}
By Lemma \ref{lem:Vhatprod}, we have that
$$
\int \bigl|\widehat{V}(\eta_2)\widehat{V}(\eta_3-\eta_2)
\cdots\widehat{V}(-\eta_k)\bigr|\,d\eta'\le
(2\pi)^{d(k-1)}\|V\|_{\hatL}^{k-2}\|V\|_{L^2}^2.
$$
Assume now that $2k\ge d$. The above integral is oscillatory, and requires integration in $t$ before $r$ to be convergent. We proceed formally here and flip the integration order of $t$ and $r$, but note that the following steps can be made rigorous by inserting a cutoff $\chi(\epsilon tr)$ and letting $\epsilon\rightarrow 0$.
Write
\begin{multline*}
t^d\int_0^\infty G_{k-\hf}\bigl(t^2r^2+t^2Q_{k,\bfs}(\eta')\bigr)\,r^{d-1}\,dr=\\
\int_0^\infty \Bigl(\frac {-1}{2r}\frac \partial{\partial r}\Bigr)^m G_{k-m-\hf}\bigl(r^2+t^2Q_{k,\bfs}(\eta')\bigr)\,r^{d-1}dr.
\end{multline*}
If $d$ is even, we take $m=\frac d2$ and integrate by parts to obtain
$$
\frac{\Gamma(\frac d2)}{2} G_{k-\frac{d+1}2}\bigl(t^2Q_{k,\bfs}(\eta')\bigr).
$$
If $d$ is odd, we take $m=\frac {d-1}2$, integrate by parts, and use \eqref{eqn:Gint} to obtain
$$
\frac{\Gamma(\frac d2)}{\Gamma(\frac 12)}\int_0^\infty G_{k-\frac d2}\bigl(r^2+t^2Q_{k,\bfs}(\eta')\bigr)\,dr
=\frac{\Gamma(\frac d2)}{2}G_{k-\frac{d+1}2}\bigl(t^2Q_{k,\bfs}(\eta')\bigr).
$$

We thus conclude the following.

\begin{lemma}\label{lem:akVform}
If $V\in \hatL_{\comp}(\R^d)$ and $2k\ge d$, then 
$$\trace\la \phi,W_{k,V}\ra=\int_0^\infty t^{2k-d}a_{k,V}(t)\ts \phi(t)\,dt$$
where
\begin{multline}\label{eqn:akVform}
a_{k,V}(t)=\frac{4\pi^{\frac d2}}{k(2\pi)^{dk}}
\int_{\R^{d(k-1)}}\int_{\Delta^{k-1}}
G_{k-\frac {d+1}2}\Bigl(t^2Q_{k,\bfs}(\eta')\Bigr)\times\\
\widehat{V}(\eta_2)\widehat{V}(\eta_3-\eta_2)\cdots\widehat{V}(-\eta_k)\,d\bfs\,d\eta'.
\end{multline}
\end{lemma}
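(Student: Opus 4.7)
The plan is to prove Lemma \ref{lem:akVform} by starting from the expression \eqref{eqn:traceform} derived in the preceding paragraphs, which is valid under the assumption $V\in\hatL_{\comp}$ and which integrates the $\eta_1$ variable out using radial symmetry. What remains is to carry out the $r$ integration against $t^{2k}\phi(t)\ts G_{k-\frac 12}\bigl(t^2r^2+t^2Q_{k,\bfs}(\eta')\bigr)\,r^{d-1}$, and to show that the net result is $t^{2k-d}$ times a function of $t^2Q_{k,\bfs}(\eta')$, so that pulling the factor $t^{2k-d}$ out identifies $a_{k,V}(t)$ as claimed.

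First I would rewrite the $t^d\,r^{d-1}$ factor by using the identity
$$
t^d\,G_{k-\hf}\bigl(t^2r^2+t^2Q_{k,\bfs}(\eta')\bigr)=\Bigl(\tfrac{-1}{2r}\partial_r\Bigr)^{\!m}G_{k-m-\hf}\bigl(r^2+t^2Q_{k,\bfs}(\eta')\bigr),
$$
which follows from \eqref{eqn:Gdiff} together with the chain rule $\partial_r=2tr\,(d/dz)$ applied to the argument $z=t^2r^2+t^2Q_{k,\bfs}(\eta')$. The value of $m$ is chosen according to the parity of $d$: for $d$ even take $m=d/2$, and for $d$ odd take $m=(d-1)/2$. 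Since $2k\ge d$, the order $k-m-\frac 12$ of $G$ remains $>-1$, so the integral representation of $G_\nu$ and the bound \eqref{eqn:Gbound} are applicable and no poles appear.

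Next I would perform $m$ integrations by parts in $r$ against the measure $r^{d-1}dr$. Each application of $\frac{-1}{2r}\partial_r$ paired with $r^{2\ell+1}dr=(r^2)^\ell r\,dr$ reduces the factor $r^{d-1}$ by two powers. For $d$ even with $m=d/2$ this exactly consumes the $r^{d-1}$ measure, so all boundary terms at $r=0$ and $r=\infty$ vanish (the former because of the $r^{d-1}$ factor, the latter by the Schwartz decay of $G_{k-m-\frac 12}$ for $\nu=k-m-\frac 12$ — here $G_\nu$ is bounded via \eqref{eqn:Gbound}, but one uses the oscillatory decay that must be justified by the regularization below), and what is left is $\frac 12\Gamma(\frac d2)\,G_{k-\frac{d+1}2}\bigl(t^2Q_{k,\bfs}(\eta')\bigr)$ after collecting the constants $\prod(2\ell)$ that produce $2^{d/2-1}\Gamma(d/2)$ divided by $2^{d/2}$. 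For $d$ odd with $m=(d-1)/2$ the boundary terms still vanish and one is left with a single $r$-integral $\int_0^\infty G_{k-\frac d2}\bigl(r^2+t^2Q_{k,\bfs}(\eta')\bigr)\,dr$; applying the reduction formula \eqref{eqn:Gint} with $\nu=k-\frac{d+1}2$ converts this to $\frac{\sqrt\pi}{2}\,G_{k-\frac{d+1}2}\bigl(t^2Q_{k,\bfs}(\eta')\bigr)$. In both cases, collecting the constants with the prefactor $\frac{8\pi^{d/2}}{k(2\pi)^{kd}\Gamma(d/2)}$ in \eqref{eqn:traceform} yields exactly $\frac{4\pi^{d/2}}{k(2\pi)^{kd}}$, matching \eqref{eqn:akVform}.

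The main obstacle is rigorously justifying the interchange of the $t$ and $r$ integrals and the integrations by parts in $r$, since the integrand in \eqref{eqn:traceform} is only conditionally convergent in $r$ due to the oscillatory decay of $G_{k-\frac 12}$. I would handle this by inserting a cutoff $\chi(\epsilon tr)$ with $\chi\in C^\infty_{\comp}(\R)$ and $\chi(0)=1$, and taking $\epsilon\to 0$ at the end. For fixed $\epsilon>0$, Fubini applies and the integrations by parts are legitimate because $\chi(\epsilon tr)$ makes the $r$-integrand compactly supported. The additional terms produced when derivatives fall on $\chi(\epsilon tr)$ carry at least one factor of $\epsilon$ and can be shown to vanish as $\epsilon\to 0$ using the Schwartz decay of $\phi$ in $t$ and the uniform bound \eqref{eqn:Gbound} on $G_\nu$ together with the absolute integrability in $\eta'$ provided by Lemma \ref{lem:Vhatprod}. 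Once the identity \eqref{eqn:akVform} is established pointwise in $t>0$, continuity of $a_{k,V}$ on $\overline{\R_+}$ follows from dominated convergence, again using \eqref{eqn:Gbound} and Lemma \ref{lem:Vhatprod} to majorize the $\eta'$-integrand.
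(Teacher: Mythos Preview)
Your approach is essentially identical to the paper's: start from \eqref{eqn:traceform}, use \eqref{eqn:Gdiff} to rewrite $G_{k-\frac12}$ as $m$ applications of $(-\frac{1}{2r}\partial_r)$ to $G_{k-m-\frac12}$, integrate by parts in $r$ with $m=\lfloor d/2\rfloor$, and for $d$ odd finish with \eqref{eqn:Gint}; the regularization by $\chi(\epsilon tr)$ is exactly the justification the paper indicates.

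One correction: the displayed identity
\[
t^d\,G_{k-\hf}\bigl(t^2r^2+t^2Q_{k,\bfs}(\eta')\bigr)=\Bigl(\tfrac{-1}{2r}\partial_r\Bigr)^{\!m}G_{k-m-\hf}\bigl(r^2+t^2Q_{k,\bfs}(\eta')\bigr)
\]
is false as a pointwise equality; the right side equals $G_{k-\frac12}\bigl(r^2+t^2Q_{k,\bfs}(\eta')\bigr)$, not $t^d\,G_{k-\frac12}\bigl(t^2r^2+t^2Q_{k,\bfs}(\eta')\bigr)$. What is true (and what the paper writes) is the equality of the corresponding \emph{integrals} against $r^{d-1}\,dr$, obtained by first substituting $r\mapsto r/t$ to absorb the $t^d$. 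With that change of variables inserted, the rest of your argument goes through as stated.
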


We conclude this section with the following observation.
\begin{lemma}\label{lem:Gkzero}
If $2k\ge d$, then
$$
G_{k-\frac {d+1}2}(0)=
\begin{cases}
\frac{\pi^{\frac 12}}{2^{2k-d-1}(k-\frac{d+1}2)!},& \;d\;\text{odd},\\
\frac{2(k-\frac d2)!}{(2k-d)!},& \;d\;\text{even}.
\end{cases}
$$
\end{lemma}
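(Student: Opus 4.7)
The plan is essentially a direct evaluation from the power series. Setting $w=0$ in the series \eqref{eqn:Gseries}, only the $m=0$ term survives, giving
$$
G_\nu(0)=\frac{4^{-\nu}\sqrt{\pi}}{\Gamma(\nu+1)},\qquad \nu=k-\tfrac{d+1}{2}.
$$
The hypothesis $2k\ge d$ ensures $\nu\ge -\tfrac12$, so $\Gamma(\nu+1)$ is finite and nonzero and the formula makes sense. From here the work is purely bookkeeping, split by the parity of $d$.

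When $d$ is odd, $2k\ge d$ and $d$ odd force $2k-d\ge 1$ (since $2k$ is even), so $\nu=k-\tfrac{d+1}{2}$ is a nonnegative integer and $\Gamma(\nu+1)=(k-\tfrac{d+1}{2})!$. Writing $4^{-\nu}=2^{d+1-2k}$ then yields the first case of the lemma. When $d$ is even, $\nu+\tfrac12=k-\tfrac{d}{2}$ is a nonnegative integer, and applying the half-integer gamma value
$$
\Gamma\bigl(n+\tfrac12\bigr)=\frac{(2n)!\sqrt{\pi}}{4^n\,n!}
$$
with $n=k-\tfrac{d}{2}$ lets $\sqrt{\pi}$ cancel with the numerator; after collecting the powers of $4$ one obtains the second case.

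There is no genuine obstacle here: the only thing to watch is the bookkeeping of the powers of $2$ (the factors $4^{-\nu}$ from the series and $4^{k-d/2}$ from the half-integer gamma must combine cleanly), and the verification that the threshold $2k\ge d$ is exactly what is needed so that $\nu\ge-\tfrac12$ in both parity cases.
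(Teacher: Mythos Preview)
Your proof is correct and in fact more direct than the paper's. The paper does not read $G_\nu(0)$ straight from the series \eqref{eqn:Gseries}; instead it uses the differentiation identity \eqref{eqn:Gdiff} together with the special cases \eqref{eqn:Gcases} to write
\[
G_{k-\frac{d+1}{2}}(0)=2\Bigl(\tfrac{-1}{2r}\tfrac{d}{dr}\Bigr)^{k-\frac d2}\cos(r)\Bigr|_{r=0}
\quad\text{or}\quad
\sqrt{\pi}\,\Bigl(\tfrac{-1}{2r}\tfrac{d}{dr}\Bigr)^{k-\frac{d+1}{2}}J_0(r)\Bigr|_{r=0},
\]
according to the parity of $d$, and then evaluates these via the Taylor expansions of $\cos$ and $J_0$. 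Your route avoids this detour: the power series already encodes the value at $0$, and the only nontrivial input is the half-integer gamma identity. The paper's approach has the minor advantage of not invoking that identity, but is otherwise longer for the same result.
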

\begin{proof}
If $d$ is even, we use \eqref{eqn:Gdiff} and \eqref{eqn:Gcases} to write
$$
G_{k-\frac {d+1}2}(0)=2\Bigl(\frac{-1}{2r}\frac d{dr}\Bigr)^{k-\frac d2}\cos(r)\Bigr|_{r=0}.
$$
If $d$ is odd, we use \eqref{eqn:Gdiff} and \eqref{eqn:Gcases} to write
$$
G_{k-\frac {d+1}2}(0)=\sqrt{\pi} \Bigl(\frac{-1}{2r}\frac d{dr}\Bigr)^{k-\frac {d+1}2}J_0(r)\Bigr|_{r=0}.
$$
The Taylor expansion for $J_0(r)$ is given by
$$
J_0(r)=\sum_{j=0}^\infty \frac{(-1)^j}{(j!)^2}\Bigl(\frac r2\Bigr)^{2j},
$$
which leads to the desired formula.
\end{proof}


\section{Evaluation of the trace for $k\ge 2$ and $2k < d$.}

$\bullet$ If $2k<d$ we cannot use Lemma \ref{lem:akVform}. Instead, we recall \eqref{eqn:traceform} and write
$$
\trace\la\phi,W_{k,V}\ra=\int M_{\phi}(\eta')\,\widehat{V}(\eta_2)\widehat{V}(\eta_3-\eta_2)\cdots\widehat{V}(\eta_k-\eta_{k-1})\widehat{V}(-\eta_k)\,d\eta',
$$
where
\begin{multline*}
M_{\phi}(\eta')=\frac{8\pi^{\frac d2}}{k(2\pi)^{kd}\Gamma(\frac d2)}\times\\
\int_{\Delta^{k-1}}\int_0^\infty\int_0^\infty t^{2k}\ts\phi(t)
G_{k-\frac 12}\bigl(t^2r^2+t^2Q_{k,\bfs}(\eta')\bigr)\,dt\,r^{d-1}dr\,d\bfs.
\end{multline*}

By \eqref{eqn:Gdiff} and \eqref{eqn:Gcases} we can write
$$
t^{2k} G_{k-\frac 12}\bigl(t^2 r^2+t^2Q_{k,\bfs}(\eta')\bigr)=2 \Bigl(\frac {-1}{2r} \frac d{dr}\Bigr)^k\cos\Bigl(t\sqrt{r^2+Q_{k,\bfs}(\eta')}\,\Bigr).
$$
Since $\phi\in\mathcal{S}(\R)$ we can integrate by parts in $r$ to see that $M_\phi(\eta')$ equals
\begin{align*}
&\frac{16\pi^{\frac d2}}{k(2\pi)^{kd}\Gamma(\frac {d-2k}2)}\int_{\Delta^{k-1}}\int_0^\infty\int_0^\infty \phi(t)
\cos\Bigl(t\sqrt{r^2+Q_{k,\bfs}(\eta')}\,\Bigr)\,dt\,r^{d-2k-1}dr\,d\bfs\\
=\,&\frac{8\pi^{\frac d2}}{k(2\pi)^{kd}\Gamma(\frac {d-2k}2)}\int_{\Delta^{k-1}}\int_0^\infty
\widehat\phi\Bigl(\sqrt{r^2+Q_{k,\bfs}(\eta')}\,\Bigr)\,r^{d-2k-1}dr\,d\bfs.
\end{align*}
A simple calculation shows the following, where $\phi\in\mathcal{S}(\R)$ is even,
$$
\frac 1\sigma\frac{d}{d\sigma}\widehat{\Bigl(\frac 1t\frac{d\phi}{dt}\Bigr)}(\sigma)=\widehat\phi(\sigma).
$$
Hence, when $r\ge 0$,
$$
\Bigl(\frac 1r\frac{d}{dr}\Bigr)^j\Bigl(\Bigl(\frac 1t\frac d{dt}\Bigr)^j\phi\Bigr)^{\wedge}\Bigl(\sqrt{r^2+c}\,\Bigr)
=\widehat\phi\Bigl(\sqrt{r^2+c}\,\Bigr).
$$
Consider first the case that $d$ is even. We apply this with $j=\frac{d-2k}2$, and integrate by parts to see that $M_\phi(\eta')$ equals the following,
\begin{align*}
&\frac{(-2)^{\frac{d-2k-2}2}8\pi^{\frac d2}}{k(2\pi)^{kd}}\int_{\Delta^{k-1}}\int_0^\infty
\frac{d}{dr}\bigl((t^{-1}\partial_t)^{\frac{d-2k}2}\phi\bigr)^{\wedge}\Bigl(\sqrt{r^2+Q_{k,\bfs}(\eta')}\,\Bigr)\,dr\,d\bfs\\
=\,&\frac{4(-1)^{\frac {d-2k}2}}{k2^k(2\pi)^{(k-\frac 12)d}}\int_{\Delta^{k-1}}
\bigl((t^{-1}\partial_t)^{\frac{d-2k}2}\phi\bigr)^{\wedge}\Bigl(\sqrt{Q_{k,\bfs}(\eta')}\,\Bigr)\,d\bfs.
\end{align*}
Thus, when $d\ge 2k$ is even, we can write
$$
\trace\la\phi,W_{k,V}\ra=\int_0^\infty a_{k,V}(t)(t^{-1}\partial_t)^{\frac{d-2k}2}\phi(t)\,dt,
$$
where 
\begin{multline}\label{eqn:tracekeven}
a_{k,V}(t)=\frac{8(-1)^{\frac {d-2k}2}}{k2^k(2\pi)^{(k-\frac 12)d}}\times\\
\int_{\R^{(k-1)d}}\int_{\Delta^{k-1}}
\cos\Bigl(t\sqrt{Q_{k,\bfs}(\eta')}\,\Bigr)
\widehat{V}(\eta_2)\widehat{V}(\eta_3-\eta_2)\cdots\widehat{V}(-\eta_k)
\,d\bfs\,d\eta'.
\end{multline}

Now suppose that $d$ is odd. Similar steps, taking $j=\frac{d-2k-1}2$, lead to the following formula for $M_\phi(\eta')$,
\begin{equation*}
\frac{8(-1)^{\frac{d-2k-1}2}}{k2^k(2\pi)^{(k-\frac 12)d+\frac 12}}\int_{\Delta^{k-1}}\int_0^\infty
\bigl((t^{-1}\partial_t)^{\frac{d-2k-1}2}\phi\bigr)^{\wedge}\Bigl(\sqrt{r^2+Q_{k,\bfs}(\eta')}\,\Bigr)\,dr\,d\bfs.
\end{equation*}
If $f\in\mathcal{S}(\R)$ is an even function, for $c \ge 0$ we have
\begin{align*}
\hat f\bigl(\sqrt{r^2+c}\,\bigr)&=-2\int_0^\infty f'(t)\,\frac{\sin\bigl(t\sqrt{r^2+c}\,\bigr)}{\sqrt{r^2+c}}\,dt\\
&=-2\int_0^\infty tf'(t) \, G_{\hf}\bigl(t^2r^2+t^2c\bigr)\,dt.
\end{align*}
Thus, by \eqref{eqn:Gint} and \eqref{eqn:Gcases}, we can write
$$
\int_0^\infty \hat f\bigl(\sqrt{r^2+c}\,\bigr)\,dr=-\pi\int_0^\infty f'(t)J_0\bigl(t\sqrt{c}\ts\bigr)\,dt.
$$
Consequently, when $d>2k$ is odd, we can write
$$
\trace\la\phi,W_{k,V}\ra=\int_0^\infty a_{k,V}(t)\,\partial_t(t^{-1}\partial_t)^{\frac{d-2k-1}2}\phi(t)\,dt,
$$
where 
\begin{multline}\label{eqn:tracekodd}
a_{k,V}(t)=\frac{4(-1)^{\frac {d-2k+1}2}}{k2^k(2\pi)^{(k-\frac 12)d-\frac 12}}\times\\
\int_{\R^{(k-1)d}}\int_{\Delta^{k-1}}
J_0\Bigl(t\sqrt{Q_{k,\bfs}(\eta')}\Bigr)
\widehat{V}(\eta_2)\widehat{V}(\eta_3-\eta_2)\cdots\widehat{V}(-\eta_k)
\,d\bfs\,d\eta'.
\end{multline}


\section{Proof of Theorem \ref{thm:traceW2}}

Recall that if $k=2$, then $Q_{2,\bfs}(\eta)=s_1(1-s_1)|\eta|^2$, where $\eta\in\R^d$.
We begin the proof of Theorem \ref{thm:traceW2} by analyzing the form of $a_{2,V}(t)$, and
consider first the case of $1\le d\le 3$, hence $2k \ge d$.

$\bullet$ If $d=3$: we obtain from \eqref{eqn:akVform}, \eqref{eqn:Gcases}, and \eqref{eqn:Q2sform} that
$$
a_{2,V}(t)=\frac{1}{2(2\pi)^4}
\int_{\R^3}\int_0^1
J_0\Bigl(t|\eta|\sqrt{s-s^2}\Bigr)
|\widehat{V}(\eta)|^2\,ds\,d\eta.
$$
We use the Taylor expansion of $J_0(z)$ to evaluate the integral,
\begin{align}\notag
\int_0^1 J_0\Bigl(t|\eta|\sqrt{s-s^2}\Bigr)ds&=
\sum_{j=0}^\infty \frac{(-1)^j}{(j!)^2}\biggl(\int_0^1 s^j(1-s)^j\,ds\biggr)\biggl(\frac{t|\eta|}2\biggr)^{2j}\\
&=\frac{\sin(\frac 12 t|\eta|)}{\frac 12 t|\eta|},\label{eqn:J0int}
\end{align}
where we use the following special case of the beta integral
$$
\int_0^1 s^j(1-s)^j\,ds=\frac{(j!)^2}{(2j+1)!}\,.
$$
Thus,
\begin{equation*}
a_{2,V}(t)=\frac{1}{(2\pi)^4}
\int_{\R^3}
\frac{\sin\bigl(\frac 12 t|\eta|\bigr)}{t|\eta|}\,
|\widehat{V}(\eta)|^2\,d\eta,\qquad d=3.
\end{equation*}

$\bullet$ If $d=1$, then by \eqref{eqn:akVform},
$$
a_{2,V}(t)=\frac{2\pi^{\frac 12}}{(2\pi)^2}
\int_{\R}\int_0^1
G_1\bigl(t^2s(1-s)|\eta|^2\bigr)
|\widehat{V}(\eta)|^2\,ds\,d\eta.
$$
Using \eqref{eqn:Gcases}, \eqref{eqn:Gdiff}, and the expansion of $J_0$, this yields
$$
a_{2,V}(t)=\frac{1}{\pi}
\int_{\R}
\frac{1-\cos\bigl(\frac 12 t|\eta|\bigr)}{t^2|\eta|^2}\,
|\widehat{V}(\eta)|^2\,d\eta,\qquad d=1.
$$

$\bullet$ If $d=2$, then by \eqref{eqn:akVform},
$$
a_{2,V}(t)=\frac{1}{(2\pi)^3}
\int_{\R^2}\int_0^1
G_{\hf}\bigl(s(1-s)t^2|\eta|^2\bigr)
|\widehat{V}(\eta)|^2\,ds\,d\eta.
$$

$\bullet$ If $d\ge 5$ is odd, we use \eqref{eqn:tracekodd} and \eqref{eqn:J0int} to write $\trace\la\phi,W_{2,V}\ra$ in the indicated form with
\begin{equation}\label{eqn:2traceform}
a_{2,V}(t)=
\frac{(-1)^{\frac{d+1}2}}{(2\pi)^{\frac {3d-1}2}}
\int_{\R^d}\frac{\sin\bigl(\frac 12 t|\eta|\bigr)}{t|\eta|}\,|\widehat{V}(\eta)|^2\,d\eta.
\end{equation}
Note that the same form for $a_{2,V}(t)$ holds when $d=3$, which is expected by formally writing
$\partial_t(t\ts \partial_t)^{-1}\phi=t\ts \phi$.

$\bullet$ If $d\ge 4$ is even, we use \eqref{eqn:tracekeven} to write $\trace\la\phi,W_{2,V}\ra$ in the indicated form with
\begin{equation}\label{eqn:trace2even}
a_{2,V}(t)=
\frac{(-1)^{\frac d2}}{(2\pi)^{\frac {3d}2}}
\int_{\R^d}\int_0^1 \cos\Bigl(t|\eta|\sqrt{s-s^2}\,\Bigr)|\widehat V(\eta)|^2\,ds\,d\eta.
\end{equation}


\begin{proof}[Proof of Theorem \ref{thm:traceW2}]
We present the proof for $d\ge 3$; the proof for $d=1,2$ follows similarly. We assume $V\in L^\infty_{\comp}(\R^d)$ is real valued.

If $d\ge 3$ is odd, by \eqref{eqn:2traceform} the asserted form for the trace holds with
\begin{equation*}
a_{2,V}(t)=\frac{(-1)^{\frac{d+1}2}}{2(2\pi)^{\frac {3d-1}2}}\int_{\R^d}\,
G_{\hf}\bigl(t^2|\eta|^2/4\bigr)\,|\widehat V(\eta)|^2\,d\eta.
\end{equation*}
Since $|\widehat V(\eta)|^2$ is integrable, dominated convergence implies $a_{2,V}\in C^0\bigl(\Rbarplus\bigr)$. Furthermore, $|a_{2,V}(t)|\le \|V\|_{L^2}^2/2(2\pi)^{\frac{d-1}2}$,
and equality holds at $t=0$.

Assume now that $V\in H^m(\R^d)$, where $m\ge 1$. Dominated convergence and differentiation under the integral shows that $a_{2,V}\in C^{2m}(\overline{\R_+})$. Furthermore, by \eqref{eqn:Gbound},
\begin{equation*}
|\partial_t^j a_{2,V}(t)|\le \frac{1}{2^{j+1}(2\pi)^{\frac{d-1}2}(j+1)}\bigl\||D|^{\frac j2}V\|_{L^2}^2.
\end{equation*}
We note that this bound also holds for $d=1$.
For $m\ge 1$ we can write
$$
G_{\hf}\bigl(t^2|\eta|^2/4\bigr)=
\sum_{j=0}^{m-1}\frac{(-1)^jt^{2j}|\eta|^{2j}}{2^{2j}(2j+1)!}
\,+\,
(-1)^m r_{2m+1}(t|\eta|)\,t^{2m}|\eta|^{2m},
$$
where $r_{2m+1}(s)$ is a nonnegative, even, real analytic function satisfying
$$
r_{2m+1}(0)=\frac{1}{2^{2j}(2m+1)!}\,,\qquad \sup_{s\in\R}\,r_{2m+1}(s)\le\frac{1}{2^{2j}(2m+1)!}\,.
$$
By dominated convergence it follows that the expansion \eqref{eqn:traceW2} holds, with
\begin{equation}\label{eqn:c2jform}
c_{2,j}=\frac{(-1)^{j+\frac{d+1}2}\||D|^jV\|_{L^2}^2}{ 2^{2j+1} (2\pi)^{\frac{d-1}2} (2j+1)!}\,.
\end{equation}
Conversely, suppose that $V\in H^{m-1}$, and that \eqref{eqn:traceW2'} holds. Necessarily \eqref{eqn:c2jform} holds for $0\le j\le m-1$ by the above steps. We thus conclude that
$$
\sup_{t\in[0,1]}\;\int_{\R^3}r_{2m+1}(t|\eta|)\,|\eta|^{2m}
|\widehat V(\eta)|^2\,d\eta\le C.
$$
Since $r_{2m+1}\ge 0$ and $\lim_{t\rightarrow 0}r_{2m+1}(t|\eta|)=1/2^{2j}(2m+1)!$ pointwise in $\eta$, an application of Fatou's lemma yields that $|\eta|^{2m}|\widehat V(\eta)|^2$ is integrable.

If $d\ge 4$ is even, we use \eqref{eqn:trace2even} for $a_{2,V}(t)$.
Dominated convergence implies $a_{2,V}\in C^0\bigl(\Rbarplus\bigr)$. Furthermore $|a_{2,V}(t)|\le \|V\|_{L^2}^2/(2\pi)^{\frac d2}$, with equality at $t=0$.

If $V\in H^m(\R^d)$, differentiating under the integral sign and evaluating the beta integral shows that $a_{2,V}\in C^{2m}(\overline{\R_+})$, with
\begin{equation*}
\begin{split}
|\partial_t^j a_{2,V}(t)|&\le \frac{\Gamma(\frac j2+1)^2}{(2\pi)^{\frac d2}\Gamma(j+2)}
\bigl\||D|^{\frac j2}V\|_{L^2}^2\\
&\le \frac 1{2^j(2\pi)^{\frac d2}}\bigl\||D|^{\frac j2}V\|_{L^2}^2.
\end{split}
\end{equation*}
We note that the same bounds hold for $d=2$.

To control the remainder term in the Taylor expansion, we write
$$
\cos(x)=\sum_{j=0}^{m-1}\frac{(-1)^jx^{2j}}{(2j)!}+(-1)^mr_{2m}(x^2)x^{2m}
$$
where $r_{2m}\ge 0$, and
$$
r_{2m}(0)=\frac{1}{(2m)!}\,,\qquad \sup_{s\in\R}\,r_{2m}(s)\le\frac{1}{(2m)!}.
$$
Then
\begin{multline*}
\int_0^1 \cos\Bigl(t|\eta|\sqrt{s-s^2\,}\,\Bigr)\,ds\\=
\sum_{j=0}^{m-1} \frac{(-1)^j(j!)^2}{(2j)!(2j+1)!}\,t^{2j}|\eta|^{2j}+
(-1)^m\tilde r_{2m}(t^2|\eta|^2)\,t^{2m}|\eta|^{2m},
\end{multline*}
where $\tilde r_{2m}$ is the nonnegative function defined by
$$
\tilde r_{2m}(t^2|\eta|^2)=2\int_0^1 s^m(1-s)^mr_{2m}\bigl(s(1-s)t^2|\eta|^2\bigr)\,ds.
$$
We conclude that
$$
c_{2,j}=\frac{(-1)^{j+\frac d2}(j!)^2\,\||D|^jV\|_{L^2}^2}{(2\pi)^{\frac d2}(2j)!(2j+1)!}\,.
$$
Since $\tilde r_{2m}(0)\ge 0$, the remainder of the proof follows as for $d$ odd.
\end{proof}


\section{Proof of Theorem \ref{thm:traceWk} when $d\ge 4$.} 

For $d\ge 4$ we will deduce Theorem \ref{thm:traceWk} from an expansion of the multiplier that determines $a_{k,V}(t)$. Recall that $Q_{k,\bfs}(\eta')=Q_{k,\bfs}(0,\eta_2,\ldots,\eta_k)$, where $Q_{k,\bfs}$ is defined in \eqref{eqn:etaform}. In matrix form, $Q_{k,\bfs}$ takes the form
$$
Q_{k,\bfs}(\eta')=[0,\eta_2,\ldots,\eta_k]\cdot[\text{diag}(\bfs)-\bfs\otimes\bfs]\cdot[0,\eta_2,\ldots,\eta_k]^T,
$$
where $\eta_i\cdot\eta_j$ is the dot product, and $\bfs$ acts by multiplying $\eta_j$ by $s_j$.
We express this in terms of the variables of $\widehat V$. For $1\le j\le k$ let
$$
\theta_j=\eta_{j+1}-\eta_j,\quad\text{where}\;\eta_1=\eta_{k+1}=0.
$$
Then
$$
\eta_i=\sum_{j<i}\theta_j=-\sum_{j\ge i}\theta_j.
$$
We use these representations in the quadratic expression for $Q_{k,\bfs}(\eta')$, according to the upper or lower diagonal parts of $Q_{k,\bfs}(\eta')$, to see that
\begin{equation*}
Q_{k,\bfs}(\eta')=\sum_{i<j}q_{k,\bfs}(i,j)\la\theta_i,\theta_j\ra,
\end{equation*}
where
$$
q_{k,\bfs}(i,j)=\sum_{i<l\le j} (s_\ell-s_\ell^2)-2\sum_{i<\ell<m\le j}s_\ell s_m.
$$
In particular, $|q_{k,\bfs}(i,j)|\le 1$. Therefore,
$$
|Q_{k,\bfs}(\eta')|\le \sum_{i< j}|\theta_i||\theta_j|,\qquad
|Q_{k,\bfs}(\eta')|^{\hf}\le \sum_i |\theta_i|.
$$

We thus conclude from Lemma \ref{lem:Vhatprod} that
\begin{multline}\label{eqn:Qkbound}
\int_{\Delta^{k-1}}\int_{\R^{d(k-1)}}Q_{k,\bfs}(\eta')^{\frac j2}|\widehat{V}(\eta_2)||\widehat{V}(\eta_3-\eta_2)|\cdots|\widehat{V}(-\eta_k)|\,d\eta'\,d\bfs
\\ 
\le
\frac{k^j(2\pi)^{d(k-1)}}{(k-1)!}\|V\|_{\hatL}^{k-2}\|V\|_{H^{\lceil j/2 \rceil}}^2.
\end{multline}

Consider first the case where $2k\ge d$. We use \eqref{eqn:akVform} to express $a_{k,V}(t)$.
Differentiation under the integral sign, together with \eqref{eqn:Gbound} and \eqref{eqn:Qkbound}, shows that if $V\in \hatL\cap H^m$, then $a_{k,V}\in C^{2m}(\overline{\R_+})$, and for $0\le j\le 2m$,
$$
|\partial_t^j a_{k,V}(t)|\le \frac{2^{d+3-2k}\pi^{\frac d2}k^j\Gamma(\frac{j+1}2)}{(2\pi)^d k!\Gamma(\frac{2k-d+j+1}2)}
\|V\|_{\hatL}^{k-2}\|V\|_{H^{\lceil j/2 \rceil}}^2.
$$
We refer to \eqref{eqn:tildeakeven}-\eqref{eqn:tildeakodd}, and use Lemma \ref{lem:tildeak} to conclude that
\begin{align}\notag
|\partial_t^j \tilde a_{k,V}(t)|&\le 
\frac{2\,k^j\,\Gamma\bigl(\frac{j+1}2\bigr)}
{(2\pi)^{\frac d2}2^{2(k-2)}k!\Gamma\bigl(\frac{j+1}2+k-2\bigr)}\|V\|_{\hatL}^{k-2}\|V\|_{H^{\lceil j/2 \rceil}}^2\\
\notag
&\le \frac{2\,k^j\,\Gamma\bigl(\frac 12\bigr)}{(2\pi)^{\frac d2}2^{2(k-2)}k!\,\Gamma\bigl(k-\frac 32\bigr)}\|V\|_{\hatL}^{k-2}\|V\|_{H^{\lceil j/2 \rceil}}^2\\
&\le \frac{C_d \, k^{j-2}}{(2k-4)!}\|V\|_{\hatL}^{k-2}\|V\|_{H^{\lceil j/2 \rceil}}^2.
\label{eqn:tildeakbound}
\end{align}

If $2k<d$, then we express $\trace\la\phi,W_{k,V}\ra$ in the form \eqref{eqn:traceWk'}, where
$a_{k,V}(t)$ is given by \eqref{eqn:tracekeven} or \eqref{eqn:tracekodd}, respectively if $d$ is even or odd. Lemma \ref{lem:tildeak} leads to the bound \eqref{eqn:tildeakbound} in this case as well.

We conclude that, when $V\in \hatL\cap H^m$, then for $j\le 2m$,
\begin{equation}\label{eqn:alphabound}
|\partial_t^j\alpha_V(t)|\le C_d\,\|V\|_{\lceil j/2 \rceil}^2\,
p_j\Bigl(t\ts \|V\|_{\hatL}^{1/2}\Bigr)
\cosh\Bigl(t\ts \|V\|_{\hatL}^{1/2}\Bigr),
\end{equation}
where $p_j$ is a polynomial of order at most $\max(0,j-2)$.

The Taylor polynomial for $a_{k,V}(t)$ of order $2m$ at $t=0$ is of the form
$$
a_{k,V}(t)=\sum_{j=0}^m c_{k,j}\,t^{2j}\int_{\Delta^{k-1}}\int_{\R^{d(k-1)}}Q_{k,\bfs}(\eta')^j\widehat{V}(\eta_2)\cdots\widehat{V}(-\eta_k)\,d\eta'\,d\bfs,
$$
where the coefficients $c_{k,j}$ can be read off from \eqref{eqn:akVform} and \eqref{eqn:Gseries}. The Taylor polynomial of $\tilde a_{k,V}$ can then be deduced from \eqref{eqn:tildeakeven} and \eqref{eqn:tildeakodd}.


\section{Proof of Theorem \ref{thm:traceWk} when $d\le 3$.} 
To show that $\trace\la \phi,W_{k,V}\ra$ admits an expansion for $V\in L^\infty_{\comp}(\R^d)$ if $d\le 3$, we establish a representation that involves a multilinear integral of $V$ instead of $\widehat{V}$. Throughout this section we assume that $V\in L^\infty_{\comp}(\R^d)$.
\begin{theorem}\label{thm:spatial}
If $d=1,2,3$, and $k\ge 2$, then one can write
$$
\trace\la \phi,W_{k,V}\ra=\int_0^\infty t^{2k-d}\phi(t)\int_{\R^{dk}}\biggl(\,V(u_1)\prod_{j=2}^k V(u_1+tu_j)\biggr)\,d\sigma_k(u')\,du_1\,dt,
$$
with $d\sigma_k(u')$ a finite positive measure on $\R^{d(k-1)}$, supported in the set
$$
|u_2|+|u_3-u_2|+\cdots +|u_k-u_{k-1}|+|u_k|\le 1.
$$
Furthermore,
\begin{equation}\label{eqn:intdsigma}
\int d\sigma_k(u')=
\begin{cases} \Bigl(2^{2k-3}k!(k-1)!\Bigr)^{-1},& d=1,\\
2\Bigl(\pi k(2k-2)!\Bigr)^{-1},& d=2,\\
\Bigl(2^{2k-3}\pi k!(k-2)!\Bigr)^{-1},& d=3.
\end{cases}
\end{equation}
If $d=1,2$, then $d\sigma_k(u')$ is absolutely continuous with respect to $du'$.
\end{theorem}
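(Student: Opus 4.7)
The plan is to Fourier-invert the formula in Lemma~\ref{lem:ktraceform} and work in physical space, exploiting the fact that in dimensions $d\le 3$ the kernel of $\Sin(s)$ has a simple explicit form: $\tfrac12\mathbf{1}_{|x|<s}$ in $d=1$; $(2\pi)^{-1}(s^2-|x|^2)_+^{-1/2}\mathbf{1}_{|x|<s}$ in $d=2$; and $(4\pi s)^{-1}$ times surface measure on the sphere of radius $s$ in $d=3$. I denote these kernels by $K_d(s,x)$; each is a positive, locally finite measure in $x$ and is scale-homogeneous in the sense $K_d(t\tau,tu)=t^{-(d-1)}K_d(\tau,u)$.

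Starting from the identity
$$
\trace\la\phi,W_{k,V}\ra=\frac{4}{k}\int_{\R_+^k}\psi(s_1+\cdots+s_k)\,\trace\bigl(\Sin(s_k)V\cdots\Sin(s_1)V\bigr)\,d^k\!s,
$$
I would expand the operator trace as the integral over a closed chain
$$
\int_{\R^{dk}}\prod_{j=1}^k V(x_j)\,K_d(s_j,x_{j+1}-x_j)\,dx,\qquad x_{k+1}=x_1,
$$
carrying the mollifier $\hat\rho(\epsilon D)$ from Section~\ref{sec:traceform} through and passing to $\epsilon\to 0$ with the help of the trace-class bounds in Lemma~\ref{lem:tracesum}. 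Next I change variables $t=s_1+\cdots+s_k$, $\tau_j=s_j/t\in\Delta^{k-1}$, $u_1=x_1$, and $u_j=(x_j-x_1)/t$ for $j\ge 2$. The Jacobians $t^{k-1}$ from $d^k\!s$ and $t^{d(k-1)}$ from $dx$, combined with $k$ factors of $t^{-(d-1)}$ from homogeneity of $K_d$ and the $t$ from $\psi(t)=t\phi(t)$, produce the prefactor $t^{2k-d}$ exactly.

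This identifies
$$
d\sigma_k(u')=\frac{4}{k}\Bigl(\int_{\Delta^{k-1}}\prod_{j=1}^k K_d(\tau_j,w_j(u'))\,d\tau\Bigr)du',
$$
with cyclic differences $w_1=u_2$, $w_j=u_{j+1}-u_j$ for $2\le j\le k-1$, and $w_k=-u_k$. Positivity is inherited from $K_d$; absolute continuity for $d=1,2$ follows since $K_d(\tau,\cdot)\in L^1_{\loc}$, while the spherical delta factors in $d=3$ make $d\sigma_k$ singular. Each factor forces $|w_j|\le\tau_j$ (equality in $d=3$), and since $\sum\tau_j=1$ on the simplex, the support condition $|u_2|+|u_3-u_2|+\cdots+|u_k|\le 1$ follows. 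For the total mass \eqref{eqn:intdsigma}, the cleanest route is to compare with the Fourier-side formula \eqref{eqn:akVform} at $t=0$: iterated Plancherel gives $\int\widehat V(\eta_2)\widehat V(\eta_3-\eta_2)\cdots\widehat V(-\eta_k)\,d\eta'=(2\pi)^{d(k-1)}\int V^k\,dx$, so that the spatial formula $a_{k,V}(0)=\bigl(\int d\sigma_k\bigr)\int V^k$ matches $\frac{4\pi^{d/2}G_{k-(d+1)/2}(0)}{k!(2\pi)^d}\int V^k$ coming from \eqref{eqn:akVform}; plugging in the explicit values of $G_{k-(d+1)/2}(0)$ from Lemma~\ref{lem:Gkzero} and simplifying yields \eqref{eqn:intdsigma}.

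The main obstacle I expect is the $d=3$ case: interpreting the product $\prod_j K_3(s_j,\cdot)$ of three-dimensional surface-measure kernels as the kernel of an operator whose trace really equals $\trace\la\phi,W_{k,V}\ra$, and justifying the scaling change of variables distributionally. Working throughout with the $\hat\rho(\epsilon D)$-mollified kernels makes every kernel smooth and every interchange of integration classical; passage to the limit $\epsilon\to 0$ then reduces to dominated convergence, for which the $\L^{d+1}$ bound on $V\Sin(s)$ from the proof of Lemma~\ref{lem:tracesum} supplies the required control.
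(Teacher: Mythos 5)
Your proposal is correct and follows essentially the same route as the paper: pass to the mollified physical-space kernels of $\Sin(s)$ for $d\le 3$, expand the trace as a cyclic chain integral, rescale in $(s,x)$ to extract the factor $t^{2k-d}$ and define $d\sigma_k$, and compute $\int d\sigma_k$ by matching the value $a_{k,V}(0)$ against the Fourier-side formula \eqref{eqn:akVform} together with Lemma \ref{lem:Gkzero}. The differences are only presentational: the paper integrates out $s$ first in $d=3$ so the measure appears with a single factor $\delta\bigl(t-f(u')\bigr)$, and it supplies the explicit H\"older/Young estimate showing the density is integrable when $d=2$, a point your sketch leaves implicit.
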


\begin{corollary}\label{cor:holderest}
For $d=1,2,3$, and $k\ge 2$, one can write
$$
\trace\la\phi,W_{k,V}\ra=\int_0^\infty t^{2k-d}a_{k,V}(t)\,\phi(t)\,dt,
$$
where $a_{k,V}(t)\in C(\overline{\R_+})$, and satisfies the following bound when $1\le p_j\le \infty$ and $\sum_{j=1}^k p_j^{-1}=1$, 
$$
|a_{k,V}(t)|\le 
\biggl(\int d\sigma_k(u')\biggr)\prod_{j=1}^k \|V(x_j)\|_{p_j}.
$$
\end{corollary}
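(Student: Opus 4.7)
To prove Corollary \ref{cor:holderest}, I would read the formula for $a_{k,V}$ directly off Theorem \ref{thm:spatial}. Since $V \in L^\infty_{\comp}$ and the measure $d\sigma_k$ is finite and compactly supported, Fubini applies, and it is natural to set
$$
a_{k,V}(t) = \int_{\R^{d(k-1)}}\int_{\R^d} V(u_1)\prod_{j=2}^k V(u_1+tu_j)\,du_1\,d\sigma_k(u'),
$$
so that the formula in Theorem \ref{thm:spatial} takes the form $\trace\la\phi,W_{k,V}\ra = \int_0^\infty t^{2k-d}a_{k,V}(t)\,\phi(t)\,dt$. It then remains to verify the pointwise bound and the continuity.

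For the bound, I would fix $u' = (u_2,\ldots,u_k)$ and apply the generalized H\"older inequality to the $u_1$-integral with exponents $(p_1,\ldots,p_k)$ satisfying $\sum_j p_j^{-1}=1$:
$$
\biggl|\int_{\R^d} V(u_1)\prod_{j=2}^k V(u_1+tu_j)\,du_1\biggr|
\le \|V\|_{L^{p_1}}\prod_{j=2}^k \|V(\cdot+tu_j)\|_{L^{p_j}}
= \prod_{j=1}^k \|V\|_{L^{p_j}},
$$
where the final equality is translation invariance of each $L^{p_j}$ norm (including $p_j=\infty$, interpreted via essential supremum). Integrating against the finite positive measure $d\sigma_k$ then yields the asserted bound.

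For continuity of $a_{k,V}$ on $\overline{\R_+}$, I would apply dominated convergence in two stages. For fixed $u'$ in the compact support of $d\sigma_k$, pointwise continuity in $t$ of the inner integral follows by telescoping the product $\prod_{j=2}^k V(u_1+tu_j)$ and using $L^1$-continuity of translation (valid since $V \in L^1(\R^d)$) together with $\|V\|_{L^\infty}$ bounds on the remaining factors. The inner integral is moreover dominated by $\|V\|_{L^\infty}^k$ times the volume of a fixed bounded set, uniformly in $u'\in\supp(d\sigma_k)$ and $t$ in any bounded interval, so a second application of dominated convergence against the finite measure $d\sigma_k$ transfers continuity to $a_{k,V}$ itself.

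No substantive obstacle arises here: the corollary is an immediate consequence of the explicit integral representation furnished by Theorem \ref{thm:spatial} combined with elementary translation-invariance and translation-continuity properties of $L^p$ norms. The real work lies in Theorem \ref{thm:spatial}; the corollary is essentially bookkeeping.
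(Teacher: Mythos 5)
Your proposal is correct and follows essentially the same route as the paper: read off $a_{k,V}(t)$ from the representation in Theorem \ref{thm:spatial}, apply the generalized H\"older inequality in $u_1$ together with translation invariance of the $L^{p_j}$ norms, and obtain continuity from continuity of translation in $L^p$ for $p<\infty$ (bounding the other factors in $L^\infty$) combined with the finiteness and compact support of $d\sigma_k$. The extra detail you supply (telescoping the product, two-stage dominated convergence) is just an expanded version of the paper's one-line justification.
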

\begin{proof}
By Theorem \ref{thm:spatial}, equality holds with
$$
a_{k,V}(t)=\int_{\R^{3k}}\Bigl(\,V(u_1)\prod_{j=2}^k V(u_1+tu_j)\Bigr)\,du_1\,d\sigma_k(u').
$$
We apply H\"older's inequality to estimate the integral over $u_1$. Continuity follows by continuity of translation in $L^p$ for $p<\infty$ and the compact support of $d\sigma_k$.
\end{proof}

We remark that for $1\le d\le 3$, Corollary \ref{cor:holderest} and \eqref{eqn:intdsigma} imply
$$
|a_{k,V}(t)|\le \frac{\|V\|_{L^\infty}^{k-2}\|V\|_{L^2}^2}{(2k-2)!}.
$$
and hence
\begin{equation*}
\begin{split}
|\alpha_V(t)|&=\biggl|\sum_{k=2}^\infty (-1)^k t^{2(k-2)}a_{k,V}(t)\biggr|\\
&\le \|V\|_{L^2}^2\,\cosh\Bigl(t\,\|V\|_{L^\infty}^{1/2}\Bigr).
\end{split}
\end{equation*}

To prove Theorem \ref{thm:spatial}, we recall from Section \ref{sec:traceform} that
$$
\trace\la\phi,W_{k,V}\ra=\lim_{\epsilon\rightarrow 0}\trace\biggl(
\frac 4k\int_{\R_+^k}\psi(s_1+\cdots +s_k)\Sin_\epsilon(s_k)V\cdots V 
\Sin_\epsilon(s_1)V\,d^k\! s\biggr).
$$
For $1\le d \le 3$, the operator $\Sin_\epsilon(s)$ is convolution with respect to the measure $\rho_\epsilon*\Sin(s,x)$, where $\rho_\epsilon(x)=\epsilon^{-d}\rho(\epsilon^{-1}x)$, and
\begin{equation*}
\Sin(s,x)=\begin{cases}
\frac 12\one_{[-s,s]}(x),& d=1,\\
(2\pi)^{-1}\bigl(s^2-|x|^2\bigr)_+^{-1/2}, & d=2,\rule{0pt}{16pt}\\
(4\pi |x|)^{-1}\delta(s-|x|),& d=3.\rule{0pt}{16pt}
\end{cases}
\end{equation*}
The kernel of $\Sin_\epsilon(s,x)$ is smooth and compactly supported, and for $V\in L^\infty_{\comp}$ the trace is given by integrating the kernel of the composition over the diagonal. This leads to the following formula for $\trace\la\phi,W_{k,V}\ra$,
$$
\lim_{\epsilon\rightarrow 0}\frac 4k\int_{\R^{dk}}\int_{\R_+^k}\psi(s_1+\cdots +s_k)\prod_{j=1}^k\Sin_\epsilon(s_j,x_{j+1}-x_j)V(x_j)\,d^k\!s\,dx.
$$
We proceed to analyze the limit, starting with the case $d=3$.

\subsection{The case $d=3$}
\begin{lemma}
Let $d=3$ and $\psi(s)=s\phi(s)$, and assume $V\in L^\infty_{\comp}(\R^3)$. Then $\trace\la \phi, W_{k,V}\ra$ is equal to
\begin{equation}\label{eqn:ktraceform'}
\frac{4}{k(4\pi)^k}\int_{\R^{3k}}\psi\Bigl(|x_2-x_1|+\cdots +|x_1-x_k|\Bigr)\prod_{j=1}^k
\,\frac{V(x_j)}{|x_{j+1}-x_j|}\;dx.
\end{equation}
\end{lemma}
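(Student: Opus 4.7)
The plan is to start from the expression
$$
\trace\langle\phi,W_{k,V}\rangle=\lim_{\epsilon\to 0}\frac{4}{k}\int_{\R_+^k}\psi(s_1+\cdots+s_k)\,\trace\bigl(\Sin_\epsilon(s_k)V\cdots V\Sin_\epsilon(s_1)V\bigr)\,d^k\!s
$$
derived in Section~\ref{sec:traceform} (where cyclicity was already used to bring the integrand to this form), and to evaluate the inner trace as an integral along the diagonal of the Schwartz kernel. For $\epsilon>0$ each $\Sin_\epsilon(s,\cdot)$ is a smooth compactly supported function, so $V$ being bounded with compact support makes the operator product trace class for each $s$, with trace equal to
$$
\int_{\R^{3k}}\prod_{j=1}^k V(x_j)\,\Sin_\epsilon(s_j,x_{j+1}-x_j)\,dx_1\cdots dx_k,\qquad x_{k+1}\equiv x_1,
$$
the cyclic index coming from setting $x=y=x_1$ in the kernel after the usual multiplication-of-kernels bookkeeping.

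Next, I would substitute the explicit formula
$$
\Sin(s,x)=\frac{1}{4\pi|x|}\,\delta(s-|x|),\qquad d=3,
$$
so that $\Sin_\epsilon(s_j,z)=(4\pi)^{-1}\int\rho_\epsilon(z-y_j)|y_j|^{-1}\delta(s_j-|y_j|)\,dy_j$. Swapping the order of integration (permissible in the mollified setting, where everything is absolutely integrable), the $s_j$ variables are then eliminated against $\psi(s_1+\cdots+s_k)$ via the delta functions, yielding
$$
\frac{4}{k(4\pi)^k}\int_{\R^{3k}\times\R^{3k}}\psi\Bigl(\textstyle\sum_j|y_j|\Bigr)\prod_{j=1}^k\frac{V(x_j)\,\rho_\epsilon(x_{j+1}-x_j-y_j)}{|y_j|}\,dy\,dx.
$$
Formally, as $\epsilon\to 0$ the mollifier collapses $y_j$ to $x_{j+1}-x_j$, producing the claimed spatial formula \eqref{eqn:ktraceform'}.

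The one technical point, and the main obstacle, is justifying the passage to the $\epsilon\to 0$ limit. Here I would argue as follows: since $\psi$ is compactly supported, $\psi(\sum_j|y_j|)$ is supported where $\sum_j|y_j|\le T$ for some $T$; combined with the compact support of each $V(x_j)$, the integrand is supported in a compact subset of $\R^{6k}$ uniformly in $\epsilon$. The only singularities are the factors $|y_j|^{-1}$, each locally in $L^p(\R^3)$ for every $p<3$. Since $\rho_\epsilon\ge 0$ and $\int\rho_\epsilon=1$, and $V\in L^\infty$, one obtains a uniform $L^1$ majorant by bounding $|V(x_j)|\le \|V\|_\infty\,\one_K(x_j)$ and observing that $\int_{|y_j|\le T}|y_j|^{-1}(\rho_\epsilon*\one_{K'})(x_{j+1}-x_j-\cdot)(y_j)\,dy_j$ is uniformly bounded by a locally integrable function of $(x_j,x_{j+1})$. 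Dominated convergence then applies, and the change of variables $y_j\mapsto x_{j+1}-x_j-\epsilon z_j$ identifies the limit with the right-hand side of \eqref{eqn:ktraceform'}. The same estimate shows the right-hand side of \eqref{eqn:ktraceform'} is absolutely convergent, which confirms that $\psi(s)=s\phi(s)$ is tame enough to make every integral well defined without further regularization.
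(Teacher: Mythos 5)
Your reduction is exactly the paper's: you start from the mollified identity of Section \ref{sec:traceform}, evaluate the trace by integrating the kernel of the composition over the diagonal, insert $\Sin(s,x)=(4\pi|x|)^{-1}\delta(s-|x|)$, and integrate out the $s$-variables against the delta functions, arriving at precisely the same pre-limit expression
$$
\frac{4}{k(4\pi)^k}\int
\frac{\psi\bigl(|y_1|+\cdots+|y_k|\bigr)}{|y_1|\cdots|y_k|}
\prod_{j=1}^k\rho_\epsilon(x_{j+1}-x_j-y_j)\,V(x_j)\,dy\,dx
$$
that the paper works with. The one place you diverge is the justification of the $\epsilon\to 0$ limit, and that step --- which you correctly single out as the main obstacle --- has a genuine gap as written. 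Dominated convergence cannot be applied in the $(x,y)$ variables, because the integrand does not converge pointwise there: the factors $\rho_\epsilon(x_{j+1}-x_j-y_j)$ concentrate to delta measures. The bound you propose, uniform control of $\int_{|y_j|\le T}|y_j|^{-1}(\rho_\epsilon*\one_{K'})(x_{j+1}-x_j-y_j)\,dy_j$, yields uniform boundedness of the family of integrals, not their convergence. If instead you apply dominated convergence after the substitution $y_j=x_{j+1}-x_j-\epsilon z_j$ (where the pointwise a.e.\ limit does exist), the singular factors become $|x_{j+1}-x_j-\epsilon z_j|^{-1}$, whose singular sets move with $\epsilon$: the natural $\epsilon$-independent candidate $\prod_j|x_{j+1}-x_j|^{-1}$ does not dominate, and the majorant you actually wrote down lives in the wrong variables.

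A repair along your lines is possible but requires real work: for $z_j$ in the (compact) support of $\rho$ one can dominate $\sup_{0<\epsilon\le 1}|v-\epsilon z_j|^{-1}$ by the reciprocal of the distance from $v$ to the segment joining $0$ and $z_j$, which is locally integrable on $\R^3$ because the segment has codimension two; one must then check joint integrability of the product over $j$, keeping track of the cyclic constraint that only $k-1$ of the differences $x_{j+1}-x_j$ are independent (and one factor of the singularity can be spent using $|\psi(s)|\le Cs$). None of this is in your write-up. The paper sidesteps it with a two-region argument: by $|\psi(s)|\le Cs$ and the compact support of $V$, the contribution of the region $\min_j|y_j|<\delta$ tends to $0$ as $\delta\to 0$ uniformly in $\epsilon$; on the complementary region the function $\psi(|y_1|+\cdots+|y_k|)/(|y_1|\cdots|y_k|)$ is continuous, so the mollification in $y$ converges uniformly there, and the two facts together identify the limit as \eqref{eqn:ktraceform'}. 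Your proposal should either carry out the corrected majorant argument in the $(x,z)$ variables or adopt this splitting; as it stands, the key analytic step is asserted rather than proved.
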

\begin{proof}
Let $d\Gamma(s,y)=(4\pi |y|)^{-1}\delta(s-|y|)$ denote $2^{-\hf}|y|$ times surface measure on the forward light cone $|y|=s$. Then by the above, we can write $\trace\la \phi,W_{k,V}\ra$ as  $4k^{-1}$ times the following
$$
\lim_{\epsilon\rightarrow 0}
\int\psi(s_1+\cdots +s_k)\prod_{j=1}^k\,\rho_\epsilon(x_{j+1}-x_j-y_j)\,V(x_j)\,d\Gamma(s_j,y_j)\,dy\,d^k\!s\,dx.
$$
The integrand is bounded and of compact support, so we may integrate first over $s$ to see this equals
$$
\frac{4}{k(4\pi)^k}\lim_{\epsilon\rightarrow 0}
\int\frac{\psi\bigl(|y_1|+\cdots +|y_k|\bigr)}{|y_1|\cdots|y_k|}\biggl(\prod_{j=1}^k\,\rho_\epsilon(x_{j+1}-x_j-y_j)\,V(x_j)\biggr)\,dy\,dx.
$$
The bound $|\psi(s)|\le s$ and compact support of $V$ shows that the integral in $x,y$ over the region $\min_j |y_j|<\delta$ goes to $0$ as $\delta\rightarrow 0$, uniformly in $\epsilon$.
Uniform convergence away from this set for the convolution over $y$ then shows that the integral converges to \eqref{eqn:ktraceform'}.
\end{proof}

We introduce variables $x_1=u_1$, and $x_j=u_1+u_j$ for $2\le j\le k$, to write the trace as
\begin{equation*}
\frac{4}{k(4\pi)^k}\int_{\R^{3k}}\psi(f(u'))
\,\frac{V(u_1)V(u_1+u_2)\cdots V(u_1+u_k)}{|u_2||u_3-u_2|\cdots|u_k-u_{k-1}||u_k|}\;du'\,du_1,
\end{equation*}
where
$$
f(u')=|u_2|+|u_3-u_2|\cdots +|u_k-u_{k-1}|+|u_k|.
$$

The function $f$ is smooth on the open subset of $\R^{3(k-1)}$ where $u_{j+1}\ne u_j$, and $u_2\ne 0$, $u_k\ne 0$. Furthermore, with the convention $u_1=u_{k+1}=0$,
$$
\nabla_{u_j}f(u')=\frac{u_j-u_{j-1}}{|u_j-u_{j-1}|}-\frac{u_{j+1}-u_j}{|u_{j+1}-u_j|}\,,
$$
which vanishes exactly when $u_j$ lies in the convex hull $[u_{j-1},u_{j+1}]$. Let $\Omega$ be the open subset of $\R^{3(k-1)}$ where $u_2,u_k\ne 0$, and no three $u_j$'s are co-linear. 
Then $f$ has non-zero gradient with respect to each $u_j$ at all points in $\Omega$, and since the complement of $\Omega$ is measure $0$ we may write
$$
\trace\la \phi,W_{k,V}\ra=\int_0^\infty \phi(t)\int_{\R^{3k}}\biggl(\,V(u_1)\prod_{j=2}^k V(u_1+u_j)\biggr)\,d\sigma_{k,t}(u')\,du_1\,dt,
$$
where $d\sigma_{k,t}(u')$ is the following positive measure,
$$
d\sigma_{k,t}(u')=\frac{4}{k(4\pi)^k}\,\frac{ t\ts \delta\bigl(t-f(u')\bigr)\,\one_\Omega(u')}{|u_2||u_3-u_2|\cdots|u_k-u_{k-1}||u_k|}
\;du'.
$$
If we denote $d\sigma_k(u')=d\sigma_{k,1}(u')$, then by changing $u'\rightarrow tu'$, we obtain
$$
\trace\la \phi,W_{k,V}\ra=\int_0^\infty \phi(t)\,t^{2k-3}\int_{\R^{3k}}\biggl(\,V(u_1)\prod_{j=2}^k V(u_1+tu_j)\biggr)\,d\sigma_k(u')\,du_1\,dt.
$$

The above steps show more generally that
\begin{multline*}
\trace\biggl(
\frac 4k\int_{\R_+^k}\psi(s_1+\cdots +s_k)\Sin(s_k)V_k\cdots V_2\,
\Sin(s_1)V_1\,d^k\!s\biggr)\\
=\int_0^\infty \phi(t)\,t^{2k-3}\int_{\R^{3k}}\biggl(\,V_1(u_1)\prod_{j=2}^k V_j(u_1+tu_j)\biggr)\,d\sigma_k(u')\,du_1\,dt.
\end{multline*}
The proof of Lemma \ref{lem:akVform} shows that when $d\le 2k$, this equals
\begin{multline*}
\frac{4\pi^{\frac d2}}{k(2\pi)^{dk}}\int_0^\infty \phi(t)\, t^{2k-d}
\int_{\R^{d(k-1)}}\int_{\Delta^{k-1}}
G_{k-\frac {d+1}2}\Bigl(t^2Q_{k,\bfs}(\eta')\Bigr)\times\\
\widehat{V_2}(\eta_2)\widehat{V_3}(\eta_3-\eta_2)\cdots\widehat{V_k}(\eta_k-\eta_{k-1})\widehat{V_1}(-\eta_k)\,d\bfs\,d\eta'\,dt.
\end{multline*}
This yields the relation, with $d=3$ in this case, and $k\ge 2$,
\begin{equation}\label{eqn:ftsigma}
\widehat{d\sigma_k}(-\eta')=\frac{4\pi^{\frac d2}}{k(2\pi)^d}
\int_{\Delta^{k-1}}
G_{k-\frac {d+1}2}\Bigl(Q_{k,\bfs}(\tilde{\eta})\Bigr)\,d\bfs,
\end{equation}
where
$$
\tilde{\eta}=(0,\eta_2,\eta_2+\eta_3,\ldots,\eta_2+\cdots+\eta_{k-1}+\eta_k).
$$
By Lemma \ref{lem:Gkzero}, for $d=3$ we deduce that
\begin{equation*}
\int_{\R^{3(k-1)}} d\sigma_k(u')=\frac{1}{ 2^{2k-3}\ts\pi\ts k!(k-2)!}\,.
\end{equation*}


\subsection{The case $d=1$}
A similar analysis, using the formula in $d=1$:
$$
\Sin(s;x,y)=\frac 12\one_{[-s,s]}(x-y)
$$
leads to the following formula,
\begin{equation*}
\trace\la \phi,W_{k,V}\ra
=\int_0^\infty \phi(t)\,t^{2k-1}\int_{\R^k} \Bigl(V(u_1)\prod_{j=2}^k V(u_1+tu_j)\Bigr)\,\rho_k(u')\,du'\,du_1,
\end{equation*}
where
$$
\rho_k(u')=\frac 1{2^{k-2}k!}\bigl(1-f(u')\bigr)_+^{k-1}.
$$
By \eqref{eqn:ftsigma} and Lemma \ref{lem:Gkzero}, we have
$$
\int_{\R^{k-1}} \rho_k(u')\,du'=\frac{1}{2^{2k-3}k!(k-1)!}.
$$

\subsection{The case $d=2$}
If $d=2$ we do not have a closed form for $d\sigma_k(u')$, but observe that it is an integrable function for $k\ge 2$. For this, we note that $d\sigma_k(u')=\rho_k(u')\,du'$, where $\rho_k(u')$ equals $4/k(2\pi)^k$ times the following,
$$
\int_{\Delta^{k-1}}
\bigl(s_1^2-|u_2|^2\bigr)_+^{-1/2}\bigl(s_2^2-|u_3-u_2|^2\bigr)_+^{-1/2}\cdots \bigl(s_k^2-|u_k|^2\bigr)_+^{-1/2}\,d\bfs.
$$
For $k=2$, one has the estimate for $|u_2|\le\hf$,
$$
\int_{|u_2|}^{1-|u_2|}\bigl(s^2-|u_2|^2\bigr)^{-\hf}\bigl((1-s)^2-|u_2|^2\bigr)^{-\hf}\,ds
\approx 1+|\log(1-2|u_2|)|.
$$
We next observe that
$$
\Bigl\|\bigl(s^2-|u|^2\bigr)_+^{-1/2}\Bigr\|_{L^p(\R^2,du)}=C_p\,s^{\frac 2p-1},\quad 1\le p <2.
$$
When $k\ge 3$ we take $p=k/(k-1)$, and use the H\"older and Young inequalities to see that
\begin{multline*}
\int_{\R^{2(k-1)}}\bigl(s_1^2-|u_2|^2\bigr)_+^{-1/2}\bigl(s_2^2-|u_3-u_2|^2\bigr)_+^{-1/2}\cdots \bigl(s_k^2-|u_k|^2\bigr)_+^{-1/2}\,du'\\
\le C_k\prod_{j=1}^k s_j^{1-\frac 2k}.
\end{multline*}
Tonelli's theorem now implies integrability of $\rho_k(u')$.  Lemma \ref{lem:Gkzero} and \eqref{eqn:ftsigma} yield
$$
\int \rho_k(u')\,du'=\frac{2}{\pi k(2k-2)!}\,.
$$

\begin{proof}[Proof of Theorem \ref{thm:traceWk} for $d\le 3$.]
The first part of Theorem \ref{thm:traceWk} follows from Corollary \ref{cor:holderest}, so it remains to establish differentiability of $a_{k,V}$. 
We first consider the case $m=1$, and show that $a_{k,V}(t)\in C^2(\overline{\R_+})$, with bounds
\begin{equation}\label{eqn:dt2akbound}
\begin{split}
|\partial_t a_{k,V}(t)|&\le (k-1)\biggl(\int d\sigma_k(u')\biggr)\|\nabla V\|_{L^2}\|V\|_{L^2}\|V\|_{L^\infty}^{k-2},\\
|\partial_t^2 a_{k,V}(t)|&\le (k-1)^2\biggl(\int d\sigma_k(u')\biggr)\|\nabla V\|_{L^2}^2\|V\|_{L^\infty}^{k-2}.
\end{split}
\end{equation}
We present the details for $d=3$. Assume $V\in L^\infty_{\comp}\cap H^1(\R^3)$, and consider
$$
a_{k,V}(t)=\int
V(u_1)\,\biggl(\,\prod_{j=2}^kV(u_1+tu_j)\biggr)\,du_1\,d\sigma_k(u').
$$
if we apply $\partial_t$ to $a_{k,V}(t)$, we formally obtain the following 
\begin{align*}
\sum_{j=2}^k
\int
V(u_1)\biggl(\,\prod_{i\ne 1,j} V(u_1+tu_i)\biggr)\bigl(\la u_j,\nabla\ra V\bigr)(u_1+tu_j)\,du_1\,d\sigma_k(u').
\end{align*}
That this equals $\partial_t a_{k,V}(t)$ can be proven rigorously by taking difference quotients, using H\"older's inequality as in Corollary \ref{cor:holderest}, continuity of translation in $L^p$ for $p<\infty$,
and the following, which holds for $V\in H^1(\R^d)$,
\begin{equation}\label{eqn:diffquot}
\lim_{h\rightarrow 0}\sup_{|u_j|\le 1}
\biggl\|\frac{V(u_1+hu_j)-V(u_1)}{h}-\bigl(\la u_j, \nabla\ra V\bigr)(u_1)\biggr\|_{L^2(du_1)}=0.
\end{equation}

Setting $u_1=tx_1$, $u_i=x_i-x_1$ for $i>1$, then expresses $\partial_t a_{k,V}(t)$ as
\begin{multline*}
c_k\sum_{j=1}^k t^3\int_{\R^{dk}}\frac{\Bigl(\,\prod_{i\ne j} V(tx_i)\Bigr)\bigl(\la x_j-x_1,\nabla\ra V\bigr)(tx_j)}{|x_2-x_1|\cdots |x_k-x_{k-1}||x_1-x_k|}\\
\times\delta\Bigl(1-\sum_i |x_{i+1}-x_i|\Bigr)\,dx.
\end{multline*}
We make a cyclic relabeling of the indices that takes $j$ to $1$, and reverse the above change of variables to see that 
$\partial_t a_{k,V}(t)$ is equal to
$$
-c_k\sum_{j=2}^k \int\bigl(\la u_j,\nabla\ra V\bigr)(u_1)\biggl(\,\prod_{i=2}^k V(u_1+tu_i)\biggr)du_1\,d\sigma_k(u').
$$
Differentiation in $t$ now leads to the formula
\begin{multline*}
\partial_t^2 a_{k,V}(t)=c_k
\sum_{i,j=2}^k\int\bigl(\la u_j,\nabla\ra V\bigr)(u_1)
\bigl(\la u_i,\nabla\ra V\bigr)(u_1+tu_i)\\
\times
\biggl(\,\prod_{m\ne 1,i} V(u_1+tu_m)\biggr)du_1\,d\sigma_k(u').
\end{multline*}
Since $|u_j|\le 1$ on the support of $d\sigma_k(u')$, this implies the bound \eqref{eqn:dt2akbound}.

To prove that $\partial_t^2 a_{k,V}$ is continuous requires more care than for $\partial_t a_{k,V}$, since translation is not strongly continuous in $L^\infty(\R^d)$. One instead uses Lusin's Theorem applied to $V$, and absolute continuity of $|\nabla V|^2$, to see that the integral over $u_1$ of the integrand for $\partial_t^2 a_{k,V}$ is continuous in $(t,u')$.
Similar steps together with \eqref{eqn:diffquot} justify convergence of the difference quotients of $\partial_t a_{k,V}$ to the above formula for $\partial_t^2 a_{k,V}$.

For $m\ge 2$, we can apply the above proof and induction to see that $a_{k,V}\in C^{2m}(\overline{\R_+})$. Additionally, $\partial_t^j a_{k,V}(t)$ can be written as a sum of at most
$(k-1)^j$ terms, each of the form
$$
\int\bigl(\la u,\nabla\ra^{j_1} V\bigr)(u_1)
\biggl(\,\prod_{m=2}^k \bigl(\la u,\nabla\ra^{j_m} V\bigr)(u_1+tu_m)\biggr)du_1\,d\sigma_k(u'),
$$
where $\la u,\nabla\ra$ indicates some $\la u_i,\nabla\ra$, possibly different in each occurrence, and
$$
\max j_i\le \lceil j/2\rceil,\qquad \sum_{i=1}^k j_i=j.
$$
The Gagliardo--Nirenberg inequalities, see \cite[Lemma 3.4]{SmZw} which follows from \cite[(3.17) in \S 13.3]{pde}, then bounds
\begin{equation*}
|\partial_t^j a_{k,V}(t)|\le C_{d,j}(k-1)^j\biggl(\int d\sigma_k(u')\biggr)
\|V\|_{L^\infty}^{k-2}\|V\|_{\lceil j/2\rceil}^2.
\end{equation*}
In particular, the identities \eqref{eqn:intdsigma} imply
\begin{equation}\label{eqn:alphabound3}
|\partial_t^j a_{k,V}(t)|\le
\frac{C_{d,j}(k-1)^j}{(2k-2)!}\|V\|_{L^\infty}^{k-2}\|V\|_{\lceil j/2\rceil}^2.
\end{equation}
We may then sum over $k$ to obtain the bound \eqref{eqn:alphajbound} with $X_d=L^\infty_{\comp}$.
\end{proof}


\bibliographystyle{plain}
\bibliography{wave_trace}


\end{document}